\newtheorem{theorem}{Theorem}[section]
\newtheorem{lemma}[theorem]{Lemma}
\newtheorem{proposition}[theorem]{Proposition}
\theoremstyle{definition}
\theoremstyle{definition}
\newtheorem{fact}[theorem]{Fact}
\theoremstyle{definition}
\newtheorem{definition}[theorem]{Definition}
\theoremstyle{definition}
\theoremstyle{definition}
\theoremstyle{definition}
\newcommand\N{{\mathbb N}}
\newcommand\R{{\mathbb R}}
\newcommand\Vol{{\operatorname{Vol}}}
\newcommand{\la}{\langle}
\newcommand{\ra}{\rangle}
\renewcommand\geq{\geqslant}
\renewcommand\leq{\leqslant}
\newcommand{\opn}[1]{\operatorname{#1}}
\title{Monotonicity of the deficit in the local log-Brunn-Minkowski inequality}
\author{Shouda Wang}
\address{\rm Fine Hall, Princeton University}
\email{shoudawang@princeton.edu}
\begin{document}

\begin{abstract}
We establish a monotonicity property of the deficit associated with the local log–Brunn–Minkowski inequality (LLBM) under addition of line segments.
As a corollary, if the LLBM holds for a convex body 
$K$, then it also holds for 
$K
+
Z$ for any zonoid 
$Z$, which in particular yields a new proof of the inequality for zonoids.
Moreover, assuming the LLBM is valid, we prove that equality in the LLBM for smooth convex bodies with 
$C^2$support functions occurs only for homothetic bodies.
\end{abstract}

\maketitle

\section{Introduction}

The classical Brunn--Minkowski inequality is a fundamental tool in convex geometry asserting the log-concavity of the volume functional: for convex bodies
\(K,L\subset\R^{n}\) and \(t\in[0,1]\),
\[
  \Vol \bigl(tK+(1-t)L\bigr)
  \ \ge\
  \Vol(K)^{t}\,\Vol(L)^{1-t}.
\]
It is an important object of study in various  areas of geometry, analysis, and combinatorics, see, for example,  \cite{Gardner}, \cite{Figalli}, and \cite{TaoVu}.
As a generalization of the classical Brunn-Minkowski inequality, Böröczky, Lutwak, Yang, and Zhang \cite{BLYZ} conjectured the \emph{log-Brunn-Minkowski inequality}: for  origin-symmetric convex bodies $K,L$,
\[
  \Vol\!\left(K^{t}L^{1-t}\right)
  \ \ge\
  \Vol(K)^{t}\,\Vol(L)^{1-t},
\]
 where $K^{t}L^{1-t}$ is defined later in Section \ref{sec2}. 
The log-Brunn-Minkowski inequality also has many interesting connections to other problems. Indeed, it is related to the uniqueness of  Minkowski's problem for cone volume measures \cite{BLYZ}, to a spectral gap estimate for the Hilbert operator asociated to symmetric convex bodies \cite{KolesnikovMilman}, to an inequality for the Gaussian density \cite{Saroglou}, just to name a few. We refer to the survey \cite[Sec.3 \& 4]{BoroczkySurvey} for more detailed discussions.

\medskip
\noindent\textbf{Equivalent formulations.}
Since the classical Brunn-Minkowski inequality is a statement about the concavity of the function $t\mapsto \log \Vol (tK+(1-t)L)$, one naturally looks at the first and second order derivatives of this function. 
To this end, it is necessary to compute the derivatives of the volume functional with respect to  convex bodies, which is given by their \emph{mixed volume}: for convex bodies   $C_1,...,C_n\subseteq \R^n$, their mixed volume $V(C_1,...,C_n)$ is defined as  
$$
V(C_1,...,C_n):=\frac{1}{n!}\frac{d}{dt_1}\Bigr|_{t_1=0}\cdots \frac{d}{dt_n}\Bigr|_{t_n=0}
\Vol (t_1C_1+\cdots +t_nC_n) .
$$
See \cite[Chap.5]{Schneider} for a systematic treatment of mixed volumes.

It turns out that by taking the first and second derivatives of $t\mapsto \log \Vol (tK+(1-t)L)$, we obtain  two new inequalities called Minkowski's first and second  inequality respectively.  It naturally follows that these two inequalities and the Brunn-Minkowski inequality are formally equivalent to each other (see \cite[Sec.7.2]{Schneider}  or \cite[Lem.1.1]{LLBMzonoid}).
\begin{align*}
&\cdot\text{Brunn-Minkowski (0th order): } V(tK+(1-t)L)\geq V(K)^{t}V(L)^{1-t},\\
  &\cdot  \text{Minkowski's first (1st order): }V(K,...,K,L)\geq V(K)\left(\frac{V(L)}{V(K)}\right)^{\frac{1}{n}},\\
&\cdot \text{Minkowski's second (2nd order): }V(K,...,K,L)^2\geq V(K)V(K,...,K,L,L).
\end{align*}
Similar to the case of the classical Brunn-Minkowski inequality, one can take the first and second order derivatives of the function $t\mapsto \log V(K^{1-t}L^t)$, thus obtaining the \emph{log-Minkowski inequality} and the \emph{local log-Brunn-Minkowski inequality}.
\begin{align*}
&\cdot\text{log-Brunn-Minkowski (0th order): } V(K^{t}L^{1-t})\geq V(K)^{t}V(L)^{1-t},\\
  &  \cdot\text{log-Minkowski (1st order): }V(K,...,K, h_K \log\tfrac{h_L}{h_K})\geq V(K) \log\left(\frac{V(L)}{V(K)}\right)^{\frac{1}{n}},\\
& \cdot\text{local log-Brunn-Minkowski (2nd order): }\\
&  \qquad  V(K,...,K,L)^2\geq (1-\tfrac{1}{n})V(K) V(K,...,K,L,L)+\tfrac{1}{n} V(K)V(K,...,K,\tfrac{h_L^2}{h_K}).
\end{align*}
Here $V(K,...,K,f)$ is to be understood as  $\tfrac{1}{n}\int_{S^{n-1}}  f dS_K$, where $S_K$ is the area measure of $K$.
The equivalence of these three inequalities is much harder to establish  than  the classical setting due to the non-linearity of the map $t\mapsto K^{1-t}L^t$. Nevertheless, the equivalence of the log-Brunn-Minkowski and the log-Minkowski inequality was shown in \cite{BLYZ} and the equivalence of the local log-Brunn-Minkowski inequality to the the first two inequalities was shown by combining \cite{KolesnikovMilman} with \cite{Putterman}. Therefore, the study of the log-Brunn-Minkowski (log-BM) conjecture is  reduced to the study of the local log-Brunn-Minkowski (LLBM) conjecture, which is the focus of the current note.

\medskip
\noindent\textbf{Prior work.}
The dimension 2 case of the log-BM conjecture was settled in the original paper  \cite{BLYZ}, while the conjecture  in complete generality stays wide open in higher dimensions. 

The log-BM conjecture has also been verified when both $K$ and $L$ satisfy certain symmetry assumptions. Saroglou \cite{Saroglou} proved log-BM when $K$ and $L$ are both unconditional. Böröczky and Kalantzopoulos \cite{BoroczkyK} proved  log-BM for convex bodies invariant under reflections through $n$ independent hyperplanes. Rotem \cite{Rotem} verified the log-BM for complex convex bodies. 

Note that these results put the same restriction on $K$ and $L$, which is natural for  log-BM since log-BM  is symmetric about $K$ and $L$ up to  replacing $t$ by $1-t$. However, for the log-Minkowski and LLBM inequality,
$K$ and $L$ play a very different role. In particular, 
Kolesnikov and Milman \cite[Cor.5.4]{KolesnikovMilman} pointed out that the LLBM for a fixed $K$ and all $L$ is equivalent to a lower bound on the spectral gap of the  Hilbert operator associated to $K$. Therefore it is  natural to think of $L$ merely as a ``test body'' and $K$ as the ``base body''.
And then it is  natural to ask for a  class of base bodies $K$ such that log-Minkowski/LLBM holds for all  test bodies $L$.

In \cite{LLBMzonoid},  this spectral interpretation was used to prove  LLBM for zonoids, from which it follows that the log-Minkowski inequality also holds for zonoids $K$ and all $L$. The equality cases of LLBM for zonoids were also characterized in \cite{LLBMzonoid}. 
Xi \cite{Xi2024} provided another proof of the log-Minkowski inequality for zonoids by proving a reverse log-Minkowski inequality and showing a reverse-to-forward principle. This approach  also allowed him to characterize the equality cases of the log-Minkowski inequality for zonoids.

\medskip
\noindent\textbf{Main ideas of this paper.}
The starting point of this note is to provide a different proof of LLBM for zonoids. Our approach differs from the two previous results in that we work directly with the deficit of LLBM and prove that the deficit is monotone with respect to adding an interval to the base body $K$ under a certain normalization condition on $L$.  

Let $K$ be a convex body containing the origin in its interior and $f$ be a difference of two support functions. We define the \emph{deficit} of LLBM as
\[
  \Delta(K,f)
  \ :=\
  \frac{V\!\big(K[n-1],f\big)^{2}}{\Vol(K)}
  \;-\;\frac{n-1}{n}\,V\!\big(K[n-2],f,f\big)
  \;-\;\frac{1}{n}\,V\!\Big(K[n-1],\,\frac{f^{2}}{h_{K}}\Big).
\] 
Here mixed volume of functions is defined by multilinearity (see Section \ref{sec2}).
Then the LLBM inequality asserts   \(\Delta(K,h_L)\ge 0\) holds for any origin-symmetric  body $L$.
By a standard approximation argument \cite[Appendix A]{Putterman},  it can be shown that \(\Delta(K,h_L)\ge 0\) for all origin-symmetric bodies $L$ is equivalent to  \(\Delta(K,f)\ge 0\) for all differences of support functions of origin-symmetric  bodies $f$.
In the sequel, we will work with these two formulations of LLBM interchangeably.

Let us first record an elementary  observation of the deficit of LLBM.
\begin{lemma}\label{Lem1}
Let $K$ be a full dimensional convex body  and $f$ a difference of support functions, then
    $\Delta(K,f)=\Delta(K,f+ch_K)$ holds for all $c\in\R$.
\end{lemma}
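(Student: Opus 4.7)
The plan is to verify the identity by direct expansion, using the multilinearity of mixed volumes and the fact that $V(K[n-1], h_K) = \Vol(K)$. Writing $g = f + ch_K$, I would compute each of the three terms in $\Delta(K,g)$ separately and check that the $c$-dependent contributions cancel.

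First, for the linear term, multilinearity gives
\[
V(K[n-1], f + ch_K) = V(K[n-1], f) + c\,\Vol(K),
\]
so its square divided by $\Vol(K)$ picks up an extra $2c\,V(K[n-1],f) + c^2\,\Vol(K)$. For the quadratic term,
\[
V(K[n-2], f+ch_K, f+ch_K) = V(K[n-2],f,f) + 2c\,V(K[n-1],f) + c^2\,\Vol(K).
\]
Finally, for the third term, expand pointwise on $S^{n-1}$:
\[
\frac{(f+ch_K)^2}{h_K} = \frac{f^2}{h_K} + 2cf + c^2 h_K,
\]
and apply multilinearity again to get
\[
V\!\Big(K[n-1], \tfrac{(f+ch_K)^2}{h_K}\Big) = V\!\Big(K[n-1], \tfrac{f^2}{h_K}\Big) + 2c\,V(K[n-1],f) + c^2\,\Vol(K).
\]

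Combining these with coefficients $1$, $-\tfrac{n-1}{n}$, and $-\tfrac{1}{n}$, the $c$-linear terms contribute $2c\,V(K[n-1],f) - \tfrac{n-1}{n}\cdot 2c\,V(K[n-1],f) - \tfrac{1}{n}\cdot 2c\,V(K[n-1],f) = 0$, and the $c^2$-terms contribute $c^2\Vol(K) - \tfrac{n-1}{n}c^2\Vol(K) - \tfrac{1}{n}c^2\Vol(K) = 0$. Hence $\Delta(K,f+ch_K) = \Delta(K,f)$.

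There is no real obstacle: the only subtlety is to justify the pointwise identity $(f+ch_K)^2/h_K = f^2/h_K + 2cf + c^2 h_K$ (valid since $h_K > 0$ in the interior-containing-origin case) and to recall that $V(K[n-1], h_K) = \Vol(K)$, which is immediate from the definition of mixed volume. The coefficients $\tfrac{n-1}{n}$ and $\tfrac{1}{n}$ conveniently sum to $1$, making the cancellation structural and explaining why the deficit is genuinely a function of the equivalence class $f \bmod h_K\R$.
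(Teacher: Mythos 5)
Your proof is correct and is essentially the same as the paper's: both expand the three terms of $\Delta(K,f+ch_K)$ by multilinearity (and the pointwise identity $(f+ch_K)^2/h_K = f^2/h_K + 2cf + c^2h_K$) and observe that the $c$-linear and $c^2$ contributions cancel because the coefficients $1$, $\tfrac{n-1}{n}$, $\tfrac{1}{n}$ balance. No gaps; your extra remark about why the cancellation is structural is a nice touch but not a departure from the paper's argument.
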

\begin{proof}
By a straightforward computation,
    \begin{align*}
    \Delta(K,f+ch_K) =&\frac{(V(K[n-1],f)+cV(K))^2}{V(K)}-\frac{n-1}{n} V(K[n-2],f,f)\\
    &-2c\frac{n-1}{n} V(K[n-1],f)-c^2\frac{n-1}{n} V(K)\\
    &-\frac{1}{n} V(K[n-1], \tfrac{f^2}{h_K}+2cf+c^2h_K) =\Delta(K,f).
    \end{align*}
\end{proof}
Our key tool is an explicit formula for the derivative of $\Delta(K_t,f)$, where $K_t=K+tI$, $t\geq 0$. We assume $K$ contains the origin in its interior,  $I$ is a segment  containing  the origin, and $f$ is a difference of support functions.
Then by a straightforward computation,
    \begin{align}\nonumber
        \frac{d}{dt}&\Delta(K_t,f)\\
        \nonumber
        =&
        2(n-1)\frac{V(K_t[n-2],I,f)V(K_t[n-1],f)}{V(K_t)}-n\left(\frac{V(K_t[n-1],f)}{V(K_t)}\right)^2 V(K_t[n-1],I)\\
        \nonumber
        & -\frac{(n-1)(n-2)}{n} V(K_t[n-3],I,f,f)-\frac{n-1}{n} V(K_t[n-2],I,\tfrac{f^2}{h_{K_t}})+\frac{1}{n} V(K_t[n-1],\tfrac{f^2}{h_{K_t}^2}h_I)\\
        \nonumber
        =&  \frac{(n-1)^2}{n} \left(  \frac{V(K_t[n-2],I,f)^2}{V(K_t[n-1],I)} -\frac{n-2}{n-1} V(K_t[n-3],I,f,f)-\frac{1}{n-1} V(K_t[n-2],I,\tfrac{f^2}{h_{K_t}}) \right)\\
        \nonumber
        &- n V(K_t[n-1],I)\left( \frac{V(K_t[n-1],f)}{V(K_t)} - \frac{n-1}{n} \frac{V(K_t[n-2],I,f)}{V(K_t[n-1],I)}\right)^2 \\ 
        \nonumber
        &+\frac{1}{n} V(K_t[n-1],\tfrac{f^2}{h_{K_t}^2}h_I)\\
        \nonumber
        =&  \frac{(n-1)^2}{n^2}|I| \Delta(P_{I^\perp} K_t,f|_{I^\perp}) 
        +\frac{1}{n} V(K_t[n-1],\tfrac{f^2}{h_{K_t}^2}h_I)
        \\
        &- n V(K_t[n-1],I)\left( \frac{V(K_t[n-1],f)}{V(K_t)} - \frac{n-1}{n} \frac{V(K_t[n-2],I,f)}{V(K_t[n-1],I)}\right)^2 , 
        \label{Eqn2bis}
    \end{align}
    where we used the projection formula \eqref{Eqn20} for the last equality.
 The three terms on the RHS of this identity are respectively:
\begin{enumerate}
  \item $\frac{(n-1)^2}{n^2}|I| \Delta(P_{I^\perp} K_t,f|_{I^\perp})$, which is nonnegative by induction hypothesis since  $\Delta(P_{I^\perp} K_t,f|_{I^\perp})$ is  an  LLBM term in one lower dimension;
  \item $\frac{1}{n} V(K_t[n-1],\tfrac{f^2}{h_{K_t}^2}h_I)$,  which is  nonnegative since $\tfrac{f^2}{h_{K_t}^2}h_I\geq 0$;
  \item $- n V(K_t[n-1],I)\left( \frac{V(K_t[n-1],f)}{V(K_t)} - \frac{n-1}{n} \frac{V(K_t[n-2],I,f)}{V(K_t[n-1],I)}\right)^2$, which can be set to zero by replacing $f$ with \( f+ch_{K_{t}}\) for some suitable $c$ because of Lemma \ref{Lem1}.
\end{enumerate}
This yields a monotonicity principle for the deficit of LLBM under segment addition.
On the other hand, by a simple application of the Alexandrov-Fenchel inequality, one can verify LLBM for the cube (and hence all parallelotopes since LLBM is covariant under linear transformation). Now, start with a parallelotope and apply the monotonicity principle  iteratively  to different intervals $I$, we conclude that LLBM holds for all zonotopes. Since zonoids are limits of zonotopes and mixed volume is a continuous functional on the space of convex bodies \cite[Thm.5.1.7]{Schneider}, LLBM holds for all zonoids.  We carry out this   approach  in Section \ref{sec3} to exhibit a self-contained new proof of LLBM for   zonoids.

Beyond reproving LLBM for zonoids, the segment-addition monotonicity also immediately implies that the Minkowski sum of a convex body satisfying LLBM with any zonoid must also satisfy LLBM.
\begin{theorem}\label{Thm3}
    Let  $K\in \mathcal{K}^n_s$.
    Assume   LLBM holds for any  projection $P_E K$ of $K$:  $\Delta(P_E K,h_C)\geq 0$ holds for any  linear subspace $E\subseteq \R^n$  and any  $C\in \mathcal{K}^{\dim E}_s$.     
    Then for any origin-symmetric zonoid $Z\subseteq \R^n$ and any $L\in \mathcal{K}^n_s$, $\Delta(K+Z,h_L)\geq 0$.
\end{theorem}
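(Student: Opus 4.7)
The plan is to reduce to the case of a zonotope by continuity of mixed volumes, then induct on the number of segments using the derivative identity displayed in the text to show that appending a segment preserves LLBM.

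First, by \cite[Thm.5.1.7]{Schneider}, mixed volumes are continuous in the Hausdorff metric, so $Z \mapsto \Delta(K + Z, h_L)$ is continuous, and every zonoid is a Hausdorff limit of zonotopes. It therefore suffices to prove the theorem when $Z = I_1 + \cdots + I_m$ is a finite sum of origin-symmetric segments. I would then argue by induction on $m$, uniformly in the ambient dimension and in the choice of $K$. The base case $m = 0$ is precisely the hypothesis (take $E = \R^n$). For the inductive step, set $K' := K + I_1 + \cdots + I_{m-1}$ and observe that every projection $P_E K' = P_E K + (\text{a zonotope in } E \text{ with } \leq m-1 \text{ segments})$ satisfies LLBM: this follows from applying the inductive hypothesis in dimension $\dim E$ to $P_E K$, which itself satisfies the assumption of the theorem since further projections of $P_E K$ are themselves projections of $K$. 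So it suffices to prove: if every projection of $K'$ satisfies LLBM, then so does $K' + I$ for any origin-symmetric segment $I$.

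Fix a difference of support functions $f$ and set $\phi(t) := \Delta(K' + tI, f)$. We have $\phi(0) \geq 0$ and must show $\phi(1) \geq 0$. The derivative identity from the text expresses $\phi'(t)$ as a sum of three pieces: the projection term $\tfrac{(n-1)^2}{n^2}|I|\,\Delta(P_{I^\perp}(K' + tI), f|_{I^\perp})$, which is nonnegative because $P_{I^\perp}(K' + tI) = P_{I^\perp}K'$ (as $P_{I^\perp} I = \{0\}$) satisfies LLBM by the induction; the manifestly nonnegative integrand term $\tfrac{1}{n} V((K'+tI)[n-1], \tfrac{f^2 h_I}{h_{K'+tI}^2})$; and the nonpositive correction $-n V((K'+tI)[n-1], I)\,C(t, f)^2$, where $C(t,f)$ denotes the bracketed ratio-difference appearing in the identity.

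The main obstacle is the correction term. I would handle it exactly as the excerpt indicates: by Lemma \ref{Lem1}, $\phi(t)$ is invariant under $f \mapsto f + c\, h_{K' + tI}$ for any $c$, and the choice $c(t) = -n\, C(t,f)$ annihilates the discrepancy for the shifted function. Under this shift the projection term is unchanged --- by Lemma \ref{Lem1} applied within $I^\perp$, using $h_{K'+tI}|_{I^\perp} = h_{P_{I^\perp}K'}$ --- and the integrand term remains nonnegative. Hence $\phi'(t) \geq 0$ pointwise; integrating from $0$ to $1$ yields $\phi(1) \geq \phi(0) \geq 0$, which closes the induction on $m$ and completes the proof.
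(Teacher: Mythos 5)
Your overall architecture --- reduce to zonotopes by continuity and induct on the number of segments, with the projection hypothesis propagated correctly --- is sound, and the reduction to the claim ``if every projection of $K'$ satisfies LLBM then so does $K'+I$'' is right. The gap is in the treatment of the correction term. You propose a $t$-dependent shift $c(t)$ and conclude $\phi'(t)\geq 0$ where $\phi(t):=\Delta(K'+tI,f)$. This does not follow. The identity \eqref{Eqn2bis} computes $\tfrac{d}{dt}\Delta(K_t,g)$ for a \emph{fixed} test function $g$, and for the unshifted $f$ the derivative $\phi'(t)$ genuinely contains the nonpositive term $-nV(K_t[n-1],I)\,C(t,f)^2$. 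Replacing $f$ by $\bar f_t:=f+c(t)h_{K_t}$ does not remove that term from $\phi'(t)$: although $\phi(t)=\Delta(K_t,\bar f_t)$ by Lemma \ref{Lem1}, the function $\bar f_t$ now moves with $t$, and differentiating this composite picks up an extra contribution from $\tfrac{d}{dt}\bar f_t$ (the directional derivative of the quadratic form $g\mapsto\Delta(K_t,g)$) whose sign is uncontrolled. Consequently the claimed inequality $\phi(1)\geq\phi(0)$ is not established --- and is in fact a \emph{stronger} assertion than what the paper proves, since Lemma \ref{Lem2} only gives $\Delta(K'+I,f)\geq\Delta(K',\bar f)$ for the shifted $\bar f$, not $\Delta(K'+I,f)\geq\Delta(K',f)$.

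The fix, which is exactly what the proof of Lemma \ref{Lem2} supplies, is to fix the shift \emph{once}, at the endpoint: set $\bar f:=f-c\,h_{K'+I}$ with $c$ chosen so the correction vanishes at $t=1$. The algebraic identity \eqref{Eqn16}, relying on $V(I,I,\cdot,\dots,\cdot)=0$, then shows this single fixed $\bar f$ annihilates the correction for \emph{every} $t\in[0,1]$, so that $\tfrac{d}{dt}\Delta(K'+tI,\bar f)\geq 0$ is a genuine pointwise inequality for a fixed function. One then obtains $\Delta(K'+I,f)=\Delta(K'+I,\bar f)\geq\Delta(K',\bar f)\geq 0$, where the last step applies LLBM for $K'$ to the shifted function $\bar f$, not to $f$. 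Your induction on the segment count $m$ with $n$ and $K$ varying freely is a valid variant of the paper's induction on dimension; the shift mechanism is the only real issue.
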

Note that $E=\R^n$ is allowed in the statement. That is, as part of the hypothesis we assume $K$ satisfies LLBM.

In a different direction, equation \eqref{Eqn2bis} can  be adapted to study the equality cases of LLBM assuming LLBM is true. We show that for smooth bodies with $C^2$ support functions, only trivial equality cases can occur. In a paper \cite{WeiyongJunbang} which appeared at around the same time as the current one, a similar result was obtained. While both their and our results  are based on calculus of variations of  the deficit of LLBM, their proof proceeds with a PDE method while our proof is  geometric. Our approach also  only requires a weaker assumption on the regularity of the body $K$.
\begin{theorem}
\label{Thm2}
If \(K \in \mathcal{K}^n_s\)  is smooth with $C^2$ support function and \(L\) is  origin-symmetric   with $C^2$ support function satisfying \(\Delta(K, h_L) = 0\), 
then \(L\) is a dilate of \(K\); that is, \(L = cK\) for some \(c \ge 0\).
\end{theorem}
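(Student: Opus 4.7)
The plan is to exploit the double variational nature of the equality \(\Delta(K,h_L)=0\): by LLBM it makes \(h_L\) a minimizer of the convex functional \(f\mapsto\Delta(K,f)\) (yielding an Euler--Lagrange identity in the test direction), while the smoothness of \(K\) additionally permits two-sided variation of \(K\) in smooth probe directions, forcing the segment-addition derivative from \eqref{Eqn2bis} to vanish. Combining these two first-order conditions reduces the problem to the equality case of a Cauchy--Schwarz inequality, from which \(L=cK\) follows.

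\textbf{Step 1 (Euler--Lagrange in the test direction).} Since LLBM gives \(\Delta(K,\cdot)\geq 0\) and \(\Delta(K,h_L)=0\), differentiating \(s\mapsto\Delta(K,h_L+sg)\) at \(s=0\) over admissible perturbations and specializing to \(g=h_I\) for a segment \(I\) through the origin yields
\[
\frac{V(K[n-1],h_L)}{V(K)}-\frac{n-1}{n}\frac{V(K[n-2],I,h_L)}{V(K[n-1],I)}=\frac{1}{n}\frac{V(K[n-1],h_Lh_I/h_K)}{V(K[n-1],I)}.
\]
In particular, the squared factor in the third term of \eqref{Eqn2bis} becomes explicit, and that whole third term equals \(V(K[n-1],h_Lh_I/h_K)^{2}\big/\bigl(nV(K[n-1],I)\bigr)\).

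\textbf{Step 2 (Vanishing derivative via smooth probes).} For a smooth convex origin-symmetric body \(E\) with \(C^{2}\) support function, the \(C^{2}\)-positivity of \(h_K\) makes both \(K+tE\) and \(K-tE\) convex for all small \(t\geq 0\); LLBM gives \(\Delta(K\pm tE,h_L)\geq 0\), which combined with \(\Delta(K,h_L)=0\) forces \(\tfrac{d}{dt}\Delta(K+tE,h_L)\big|_{t=0}=0\). Approximating a given segment \(I\) in Hausdorff distance by smooth bodies \(E_{n}\to I\) and using that the right-hand side of \eqref{Eqn2bis} (interpreted with \(E\) in place of \(I\)) is a continuous, indeed multilinear, functional of \(h_E\), the vanishing passes to the limit: \(\tfrac{d}{dt}\Delta(K+tI,h_L)\big|_{t=0}=0\). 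Combined with Step~1, this is the identity
\[
\frac{(n-1)^{2}}{n^{2}}|I|\,\Delta(P_{I^{\perp}}K,h_L|_{I^{\perp}})+\frac{1}{n}V(K[n-1],h_L^{2}h_I/h_K^{2})=\frac{V(K[n-1],h_Lh_I/h_K)^{2}}{nV(K[n-1],I)}.
\]

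\textbf{Step 3 (Cauchy--Schwarz rigidity).} Applying Cauchy--Schwarz with respect to the measure \(h_I\,dS_K\),
\[
V(K[n-1],h_Lh_I/h_K)^{2}\leq V(K[n-1],h_L^{2}h_I/h_K^{2})\cdot V(K[n-1],I),
\]
with equality iff \(h_L/h_K\) is constant \(h_I\,dS_K\)-almost everywhere. Combined with Step~2, the nonnegative projection term \(\tfrac{(n-1)^{2}}{n^{2}}|I|\Delta(P_{I^{\perp}}K,h_L|_{I^{\perp}})\) must vanish and Cauchy--Schwarz must be an equality for every segment \(I\). Since \(dS_K=\det(D^{2}h_K+h_K\operatorname{Id})\,d\sigma\) has strictly positive density on \(S^{n-1}\) and \(h_I>0\) off a measure-zero set, letting \(I\) range over all directions forces \(h_L/h_K\) to be constant on \(S^{n-1}\), that is, \(L=cK\). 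The main technical point is the limit in Step~2: although \(h_{E_n}\to h_I\) uniformly, the two-sided admissibility interval shrinks to zero because the curvature of \(E_n\) degenerates in its ``thin'' direction; the resolution is that only the instantaneous derivative at \(t=0\), which is multilinear (hence continuous) in \(h_E\), is needed.
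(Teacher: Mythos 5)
Your overall architecture closely parallels the paper's: two-sided variation of the base body in a segment direction, the derivative formula \eqref{Eqn2bis}, forcing the derivative to vanish at the minimum, nonnegativity of the projected term from LLBM in dimension $n-1$, and a rigidity step; your Step 1 Euler--Lagrange identity plus Cauchy--Schwarz is a harmless variant of the paper's normalization $f=h_L-ch_K$ via Lemma \ref{Lem1}. The genuine gap is in Step 2, in the claim that ``the $C^2$-positivity of $h_K$'' makes $K-tE$ (the body with support function $h_K-th_E$) convex for small $t>0$. For $h_K-th_E$ to be a support function you need $D^2h_K+h_K\,\mathrm{Id}\succeq t\,(D^2h_E+h_E\,\mathrm{Id})$ on $S^{n-1}$, i.e.\ a uniform positive lower bound on the principal radii of curvature of $K$. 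A body that is smooth with $C^2$ support function can have a null eigenvector of $D^2h_K+h_K\,\mathrm{Id}$ at some directions, and at such a direction no inward perturbation by a smooth probe with nondegenerate curvature there is admissible for any $t>0$, however small; so under the stated hypotheses the two-sided family need not exist at all, and with only $t\geq 0$ available, minimality at the endpoint gives merely the useless one-sided inequality. Your argument is correct under the stronger hypothesis that $K$ has a rolling ball ($K=K'+\varepsilon B$, uniformly positive curvature radii), but that is exactly the extra regularity the paper is at pains to avoid: its inward family is the Minkowski inner parallel body $K_t=K\div(-t)I$, which is always a convex body, and the real technical work (Fact \ref{Fact1}, using that smoothness of $K$ makes every body adapted to $K$, together with Proposition \ref{Prop2}) is to show that $(h_{K_t}-h_K)/t\to h_I$ uniformly from the left so that the two-sided derivative formula \eqref{Eqn2} is justified. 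Without a substitute for Fact \ref{Fact1}, your Step 2 does not go through for the theorem as stated.

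A secondary, fixable inaccuracy: in Step 3 you assert that $dS_K=\det(D^2h_K+h_K\,\mathrm{Id})\,d\sigma$ has strictly positive density; this too is not implied by ``smooth with $C^2$ support function'', since the density can vanish at the same degenerate directions. What you actually need, and what is true (Fact \ref{Fact2}), is that $\operatorname{supp}S_K=S^{n-1}$ for smooth $K$; then $h_I\,dS_K$ has full support, a continuous function constant $h_I\,dS_K$-a.e.\ is constant everywhere, and equality in Cauchy--Schwarz for a single segment $I$ already yields $h_L=c\,h_K$, with no need to vary $I$ over all directions.
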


\medskip
\noindent\textbf{Organization.}
We recall the  preliminaries in Section \ref{sec2},  prove the LLBM for zonoids as well as Theorem \ref{Thm3} in Section \ref{sec3}, and prove Theorem \ref{Thm2} in Section \ref{sec4}.

\section*{Acknowledgement}
The author is grateful to his advisor Ramon van Handel for helpful discussions. He also thanks Leo Brauner for a suggestion on Theorem \ref{Thm3}, and thanks Weiyong He and Junbang Liu for communicating their results and interesting discussions.

\section{Preliminaries}
\label{sec2}
\subsection{}
A convex body in $\R^n$ is a convex compact subset of $\R^n$.
The \emph{support function} of a convex body $K$ is defined by  $h_K(u):=\max_{x\in K} \la x,u\ra$. 
The Minkowski sum of $K$ with $L$ is defined as $K+L:=\{a+b:a\in K, b\in L\}$.
The Minkowski difference of $K$ and $L$ is defined as $K\div L:=\{x\in\R^n: x+L\subseteq K\}$.
The Minkowski sum can be expressed via  support functions as
$$
K+L=\{x\in\R^n: \forall u\in S^{n-1}, \,\la x,u\ra \leq h_K(u)+h_L(u) \}.
$$
This motivates the definition of $K^tL^{1-t}$. Indeed, as above we can write the weighted arithmetic mean $tK+(1-t)L$ as 
$$
tK+(1-t)L=\{x\in\R^n: \forall u\in S^{n-1}, \,\la x,u\ra \leq th_K(u)+(1-t)h_L(u) \}.
$$
Hence similarly, for  $K$ and $L$ containing the origin in their interior (i.e. $h_K(u),h_L(u)>0$ for all $u\neq 0$), it is  natural to define the weighted geometric mean of $K$ and $L$ as 
$$
K^{t}L^{1-t}:=\{x\in\R^n: \forall u\in S^{n-1}, \,\la x,u\ra \leq h_K(u)^th_L(u)^{1-t}\}.
$$
Denote by $\mathcal{K}^n_s$ the set of all origin symmetric convex bodies with nonempty interior.
We will only consider the log-BM, log-Minkowski, LLBM for convex bodies in the class $\mathcal{K}^n_s$.
\subsection{}
Let us collect some facts about mixed volume that will be used in this note, all of which can be found in \cite[Chap.5]{Schneider}. 
First, the mixed volume can be computed via integrating the support function against the mixed area measure 
\begin{equation}\label{Eqn13}
    V(C_1,...,C_n)=\frac{1}{n}\int_{S^{n-1}}h_{C_n}dS_{C_1,...,C_{n-1}}.
\end{equation}
In the case where $C_1=\cdots =C_{n-1}=C$, $S_{C,...,C}$ is equal to $S_C$, the area measure of $C$. In view of \eqref{Eqn13}, we naturally define the mixed volume of  $f\in C(S^{n-1};\R)$ and convex bodies $C_1,...,C_{n-1}$, 
\begin{equation}\label{Eqn19}
V(f,C_1,...,C_{n-1}):=\frac{1}{n}\int_{S^{n-1}} fdS_{C_1,...,C_{n-1}}.
\end{equation}
Under a linear transformation, the mixed volume  behaves in the same way as the usual volume: for $A\in \operatorname{GL}(n,\R)$ and convex bodies $C_1,...,C_n$,
\begin{equation}\label{Eqn28}
    V(A(C_1),...,A(C_n))=|\det(A)| V(C_1,...,C_n).
\end{equation}
Let $I$ be an interval segment.
Denote by $I^\perp$ the orthogonal complement of the direction of $I$ and let $P_{I^\perp}$ be the orthogonal projection onto $I^\perp$.
Then $h_{P_{I^\perp}C}(u) = h_{C}(P_{I^\perp} u)$ for all $u\in \R^n$. 
In particular, $h_{P_{I^\perp}C} = h_{C}|_{I^\perp}$.
The projection formula \cite[Theorem 5.3.1]{Schneider} asserts that 
\begin{equation}
    \label{Eqn20}
V(I, C_1,...,C_{n-1})=\frac{|I|}{n}V(P_{I^\perp}C_1,...,P_{I^\perp}C_{n-1})
\end{equation} 
where $|I|$ is the length of $I$.
For $f\in C(S^{n-1};\R)$, the  projection formula  extends to 
$$
V(I,f, C_1,...,C_{n-2})=\frac{|I|}{n}V(f|_{I^\perp\cap S^{n-1}},P_{I^\perp}C_1,...,P_{I^\perp}C_{n-2}),
$$

\subsection{}

Mixed volumes and mixed area measures  can be extended from convex bodies to a certain class of functions by multilinearity \cite[Sec. 5.2]{Schneider}. We denote by $D(S^{n-1})$ the linear space spanned by  restrictions of support functions to $S^{n-1}$. And denote by $D_s(S^{n-1})=\{f\in D(S^{n-1})\mid \forall x\in S^{n-1},\, f(x)=f(-x)\}$ the subspace in $D(S^{n-1})$ of origin-symmetric functions. Any $f\in D_s(S^{n-1})$ can be written as $f=(h_K-h_L)|_{S^{n-1}}$ with $K,L\in \mathcal{K}^n_s$. For such $f$, 
\begin{align*}
    V(f,C_1,...,C_{n-1})=&V(K,C_1,...,C_{n-1})-V(L,C_1,...,C_{n-1}),\\
    S_{f,C_1,...,C_{n-2}}=&S_{K,C_1,...,C_{n-1}} -S_{L,C_1,...,C_{n-1}}.
\end{align*}
Note that the first formula is compatible with \eqref{Eqn19}. 
We also have
$$
V(f,f,C_1,...,C_{n-2})=V(K,f,C_1,...,C_{n-2})-V(L,f,C_1,...,C_{n-2}).
$$

\section{LLBM for Zonoids}
\label{sec3}

In this section we use \eqref{Eqn2bis} to give a completely self-contained proof of LLBM for zonoids.
The proof will proceed by a double induction. We induct on the dimension, and then for a fixed dimension we induct on the number of interval summands. The latter  induction step is done in Lemma \ref{Lem2} using the formula \eqref{Eqn2bis}. Lemma \ref{Lem2} also immediately implies Theorem \ref{Thm3}. The former induction step is done in Proposition \ref{Prop1}.

\subsection{Induction on number of interval summands}

\begin{lemma}\label{Lem2}
Let $K\in \mathcal{K}^n_s$ ($n\geq 2$) and let $I$ be a segment centered at the origin such that $K=(K\div I)+I$ and $K\div I\in \mathcal{K}^n_s$. 
    Assume $\Delta(P_{I^\perp}K,g)\geq 0$ for all $g\in D_s(S^{n-2})$. Then for all  $f\in D_s(S^{n-1})$,  we have
    $$
    \Delta(K,f)=\Delta(K,\bar f)\geq \Delta(K\div I,\bar f).
    $$
    where  $\bar f=f-ch_{K}$ and  $ c =n \frac{V(K[n-1],f)}{V(K)} - (n-1)\frac{V(K[n-2],I,f)}{V(K[n-1],I)}$. 
\end{lemma}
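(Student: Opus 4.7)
The equality $\Delta(K,f) = \Delta(K,\bar f)$ is immediate from Lemma~\ref{Lem1}, so the real content is the inequality $\Delta(K,\bar f) \geq \Delta(K\div I,\bar f)$. I would interpolate via $K_t := (K\div I) + tI$ for $t \in [0,1]$, so that $K_0 = K\div I$ and $K_1 = K$, and analyze $F(t) := \Delta(K_t, \bar f)$ with the fixed function $\bar f$ from the statement. Applying the identity \eqref{Eqn2bis} with base $K_t$ in place of $K$ decomposes $F'(t)$ into the three terms (1), (2), (3) listed after \eqref{Eqn2bis}. Term (1) is in fact constant in $t$ and nonnegative by hypothesis: since $P_{I^\perp} I = \{0\}$, we have $P_{I^\perp} K_t = P_{I^\perp}(K\div I) = P_{I^\perp} K$ for all $t$, while $\bar f|_{I^\perp}$ is a fixed function, so the assumed lower-dimensional LLBM for $P_{I^\perp}K$ yields $\Delta(P_{I^\perp}K, \bar f|_{I^\perp}) \geq 0$. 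Term (2) is pointwise nonnegative.

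The main obstacle at first glance is term (3), which is a nonpositive square; the plan is to show it vanishes identically on $[0,1]$, not merely at $t=1$. Set
\[
\phi(t) := \frac{V(K_t[n-1],\bar f)}{V(K_t)} - \frac{n-1}{n}\frac{V(K_t[n-2],I,\bar f)}{V(K_t[n-1],I)}.
\]
The projection formula \eqref{Eqn20} gives $V(K_t[n-2],I,\bar f) = (|I|/n)V(P_{I^\perp}K[n-2], \bar f|_{I^\perp})$ and $V(K_t[n-1],I) = (|I|/n)V(P_{I^\perp}K)$, so their quotient is a $t$-independent constant; hence the second term of $\phi(t)$ is constant in $t$, call it $\beta$. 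Differentiating $R(t) := V(K_t[n-1],\bar f)/V(K_t)$ using $\tfrac{d}{dt}V(K_t[n-1],\bar f) = (n-1)V(K_t[n-2],I,\bar f)$ and $\tfrac{d}{dt}V(K_t) = nV(K_t[n-1],I)$, and substituting the aforementioned constancy, yields the homogeneous linear ODE $\phi'(t) = -\tfrac{nV(K_t[n-1],I)}{V(K_t)}\phi(t)$.

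The specific formula for $c$ in the lemma is engineered precisely so that $\phi(1) = 0$, as one verifies by expanding $\bar f = f - c h_K$ and using $V(K[n-1], h_K) = V(K)$ together with $V(K[n-2], I, h_K) = V(K[n-1], I)$. By uniqueness of solutions to the linear first-order ODE, $\phi(t) \equiv 0$ on $[0,1]$, whence term (3) vanishes identically. Therefore $F'(t) = (\text{term 1}) + (\text{term 2}) \geq 0$ throughout $[0,1]$, and $F(1) \geq F(0)$ by integration. The whole argument pivots on the projection-formula invariance turning the endpoint condition $\phi(1) = 0$ (secured by the choice of $c$) into a global identity.
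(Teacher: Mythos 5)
Your proof is correct and follows the same strategy as the paper: interpolate via $K_t = (K\div I) + tI$, apply \eqref{Eqn2bis}, observe terms (1) and (2) are nonnegative (term (1) via the hypothesis since $P_{I^\perp}K_t$ is independent of $t$), and kill term (3) for all $t$ rather than just $t=1$. The only difference is in verifying $\phi(t)\equiv 0$: the paper expands $V(K_t[n-1],\bar f)/V(K_t)$ directly as a ratio of affine functions of $t$ (using $K = K_t + (1-t)I$ and $V(I,I,\cdot,\dots)=0$) and observes it is constant by the algebraic identity $\tfrac{a-\lambda b}{c-\lambda d}=\tfrac{a}{c}$ when $\tfrac{a}{c}=\tfrac{b}{d}$; you instead derive the linear ODE $\phi' = -\tfrac{nV(K_t[n-1],I)}{V(K_t)}\phi$ and invoke uniqueness with $\phi(1)=0$. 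Both are valid; the paper's is the more elementary route to the same conclusion.
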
 
\begin{proof}

By Lemma \ref{Lem1},  $\Delta(K,\bar f)=\Delta(K,f)$ holds.
Denote $K_t=(K\div I)+tI$ for $t\in [0,1]$. 
We have $K=K_t+(1-t)I$.
By the choice of $c$,
\begin{equation}
\frac{V(K[n-1],\bar f)}{V(K)}=\frac{n-1}{n}\frac{V(K[n-2],I,\bar f)}{V(K[n-1],I)}.
    \label{Eqn15}
\end{equation}
Hence, for any $t\in[0,1]$, 
\begin{align}
\nonumber
\frac{V(K_t[n-1],\bar f)}{V(K_t)}
&=\frac{V(K[n-1],\bar f)-(n-1)(1-t)V(K[n-2],I,\bar f)}{V(K)-n(1-t)V(K[n-1],I)}\\
=&\frac{n-1}{n}\frac{V(K[n-2],I,\bar f)}{V(K[n-1],I)}=\frac{n-1}{n}\frac{V(K_t[n-2],I,\bar f)}{V(K_t[n-1],I)},
\label{Eqn16}
\end{align}
where for the first equality we used $K=K_t+(1-t)I$ and $V(I,I,\cdot,...,\cdot)=0$, for the second equality we used \eqref{Eqn15}, for the third equality we used $V(I,I,\cdot,...,\cdot)=0$ again.
Note that $\frac{d}{dt}h_{K_t} = h_I$, \eqref{Eqn2bis} yields
 \begin{align}
        \frac{d}{dt}&\Delta(K_t,\bar f)=    \frac{(n-1)^2}{n^2} |I|D(P_{I^\perp} K_t, f|_{I^\perp})
        \label{Eqn3}
        +\frac{1}{n} V(K_t[n-1],\tfrac{f^2}{h_{K_t}^2}h_I),
    \end{align}
    where we  used \eqref{Eqn16}.
    Since $\tfrac{f^2}{h_{K_t}^2}h_I\geq 0$, we have  $V(K_t[n-1],\tfrac{f^2}{h_{K_t}^2}h_I)\geq 0$.
    By  assumption we have  $D(P_{I^\perp} K_t, f|_{I^\perp}) =D(P_{I^\perp} K, f|_{I^\perp}) \geq 0$.
    Hence $\frac{d}{dt}\Delta(K_t,\bar f)\geq 0$.
    Therefore $\Delta(K,f) = \Delta(K,\bar f)\geq \Delta(K\div I,\bar f)$.

\end{proof}
\begin{proof}[Proof of Theorem \ref{Thm3}]
Let us prove the claim by induction on the dimension $n$. Since LLBM is true in dimension 1, our claim is \textit{a fortiori} true in dimension 1. Now assume the claim holds in dimension $n-1\geq 1$ and let us prove the claim in dimension $n$.

Consider  $\Delta (K+I_1+\cdots+I_\ell,f)$, where $I_1,...,I_\ell$ are origin-symmetric segments  and  $f\in D_s(S^{n-1})$. 
       For any subspace $E$ of $I_\ell^\perp$, $\Delta(P_{E}(P_{I_\ell^\perp} K), h_C)=\Delta(P_{E} K, h_C)\geq 0$ for any $C\in \mathcal{K}^{\dim E}_s$.
    Therefore, by induction hypothesis, $\Delta(P_{I_\ell^\perp} (K+I_1+\cdots +I_\ell), h_L)\geq 0$ holds for all $L\in \mathcal{K}^{n-1}_s$.
    This allows us to apply Lemma \ref{Lem2} with $K\leftarrow K+I_1+\cdots+I_\ell$, $I\leftarrow I_\ell$, and $f\leftarrow f$, which yields
    $$
   \Delta(K+I_1+\cdots+I_{\ell}, f)
   \geq  \Delta(K+I_1+\cdots+I_{\ell-1},\bar f)
   $$
   for some $\bar f=f-c_\ell h_{K+I_1+\cdots+I_{\ell}} $ with $c_\ell\in \R$.
   We may repeat this process $\ell$ times to obtain 
    $$
   \Delta(K+I_1+\cdots+I_{\ell}, f)
   \geq \Delta(K+I_1+\cdots+I_{\ell-1},\bar f)\geq \cdots \geq  \Delta(K,\tilde f),
   $$
   where $\tilde f=f-c_\ell h_{K+I_1+\cdots+I_{\ell}}- \cdots -c_1 h_{K+I_1}$. Since $\Delta(K,\tilde f)\geq 0$ by assumption, we have 
   $$
   \Delta(K+I_1+\cdots+I_{\ell}, f)
   \geq 0.
   $$
   We have therefore shown that the sum of $K$ with any zonotope satisfies LLBM.  
    Since zonoids are limits of zonotopes and mixed volumes are continuous with respect to Hausdorff convergence \cite[Thm.5.1.7]{Schneider},  $D(K+Z,f)\geq 0$ follows for any origin symmetric zonoid $Z$ and any $f\in D_s(S^{n-1})$, which completes the proof.
\end{proof}

\subsection{New proof for zonoids}
Since we prove by induction on the dimension, it is necessary to check the base case.
\begin{lemma}\label{Lem4}
    Let $K\subseteq \R$ be a 1-dimensional origin symmetric zonotope, and let $f\in D_s(S^0)$. Then $\Delta(K,f)= 0$.
\end{lemma}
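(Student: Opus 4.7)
The plan is to exploit the extreme simplicity of dimension one. In $\R^{1}$ the unit sphere $S^{0}=\{-1,+1\}$ has only two points, and the antipodal symmetry built into the definitions of $\mathcal{K}^{1}_{s}$ and $D_{s}(S^{0})$ forces every support function of a body in $\mathcal{K}^{1}_{s}$, and every element of $D_{s}(S^{0})$, to be constant on $S^{0}$. In particular, a body $K\in\mathcal{K}^{1}_{s}$ automatically has the form $[-a,a]$ for some $a>0$ and is trivially a zonotope, so the zonotope hypothesis is essentially free.

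First I would reduce $f$ to the zero function using Lemma~\ref{Lem1}. Writing $a = h_{K}(\pm 1)>0$ and $b = f(\pm 1)\in\R$, so that $f\equiv b$ and $h_{K}\equiv a$ on $S^{0}$, we see that $f$ lies in the line $\R\cdot h_{K}$: explicitly, $f = (b/a)\,h_{K}$. Applying Lemma~\ref{Lem1} with the constant $c = -b/a$ then gives
\[
\Delta(K,f) \;=\; \Delta\!\bigl(K,\, f - (b/a)\,h_{K}\bigr) \;=\; \Delta(K,0).
\]

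Finally I would observe that $\Delta(K,0)=0$ directly from the definition: each of the three terms in the defining formula for $\Delta$ is multilinear in its function argument and therefore vanishes when that argument is $0$. The only subtlety is that the middle term $\tfrac{n-1}{n}V(K[n-2],f,f)$ formally involves $n-2=-1$ copies of $K$ at the base case $n=1$, but it carries the coefficient $(n-1)/n=0$, so it is simply zero by convention; this is the only point I would flag, and it is not a genuine obstacle.
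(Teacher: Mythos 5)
Your proof is correct. You take a slightly different route from the paper: the paper writes out the deficit explicitly in dimension one, getting
\(\Delta(K,f)=\frac{(f(1)+f(-1))^2}{2a}-\bigl(\frac{f(1)^2}{a}+\frac{f(-1)^2}{a}\bigr)\),
and observes this vanishes because \(f(1)=f(-1)\); you instead note that since both \(f\) and \(h_K\) are constant on \(S^0\), the function \(f\) is already a scalar multiple of \(h_K\), so Lemma~\ref{Lem1} reduces everything to \(\Delta(K,0)=0\). The two arguments rest on the same symmetry observation (\(f(1)=f(-1)\) and \(h_K(1)=h_K(-1)\)), but yours has the minor advantage of sidestepping any need to unpack what the mixed volumes and mixed area measures mean when \(n=1\), while the paper's direct computation is more self-contained and makes the base case visibly explicit. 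Your remark about the \(V(K[n-2],f,f)\) term carrying a zero coefficient at \(n=1\) is the right thing to flag and is indeed harmless; one could also note that Lemma~\ref{Lem1}'s proof still goes through at \(n=1\) for the same reason.
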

\begin{proof}
    Indeed, $K$ is of the form $K=[-a,a]$ for some $a>0$. Hence $\Delta(K,f) = \frac{(f(1)+f(-1))^2}{2a}-\left(\frac{f(1)^2}{a}+\frac{f(-1)^2}{a}\right)
    =0$ because $f(1)=f(-1)$. 
\end{proof}
For a fixed dimension, in order to apply the induction on number of interval summands (Lemma \ref{Lem2}),  we just need to check that LLBM holds for the cube, which  is a well-known fact. Nevertheless, we provide a proof which is based on  formula \eqref{Eqn2bis} for sake of self-completeness.

\begin{lemma}\label{Lem3}
    Let $C$ be the  cube $[-1,1]^n$ and $f\in D_s(S^{n-1})$ ($n\geq 2$). Then $\Delta(C,f)\geq 0$ holds.
\end{lemma}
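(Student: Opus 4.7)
The plan is to prove $\Delta(C,f)\ge 0$ by induction on $n$, using formula \eqref{Eqn2bis} along the path $K_t=C_{n-1}+tI_n$ (with $C_{n-1}=[-1,1]^{n-1}\subset e_n^\perp$ and $I_n=[-e_n,e_n]$) together with the normalization from Lemma \ref{Lem2}; the base case $n=2$ is handled separately by Fourier analysis.

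For the inductive step ($n\ge 3$), set $\bar f=f-c\,h_C$ with $c$ the normalization constant from Lemma \ref{Lem2} corresponding to the decomposition $C=K_t+(1-t)I_n$; a direct computation shows $c=f(e_n)$, so $\bar f(e_n)=0$. The algebraic identity $C=K_t+(1-t)I_n$ together with $V(I_n,I_n,\cdot)=0$ propagates the Lemma \ref{Lem2} normalization along the entire path $t\in(0,1]$, so that the ``squared'' term on the third line of \eqref{Eqn2bis} vanishes identically. Moreover, since $dS_{K_t[n-1]}$ is supported on $\{\pm e_1,\ldots,\pm e_n\}$ and $h_{I_n}$ vanishes there except at $\pm e_n$, the condition $\bar f(e_n)=0$ also forces the middle term $\frac{1}{n}V(K_t[n-1],\bar f^2 h_{I_n}/h_{K_t}^2)$ to vanish. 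What remains is $\frac{d}{dt}\Delta(K_t,\bar f)=\frac{(n-1)^2}{n^2}|I_n|\,\Delta(C_{n-1},\bar f|_{e_n^\perp})$, a quantity constant in $t$ and $\ge 0$ by the induction hypothesis applied in $\R^{n-1}$.

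Integrating and using Lemma \ref{Lem1},
\[
\Delta(C,f)=\lim_{t\to 0^+}\Delta(K_t,\bar f)+\frac{2(n-1)^2}{n^2}\,\Delta(C_{n-1},\bar f|_{e_n^\perp}).
\]
The boundary limit can be computed explicitly from $K_t=I_1+\cdots+I_{n-1}+tI_n$ by multilinearity and iterated application of the projection formula \eqref{Eqn20}; using $\bar f(e_n)=0$, the result simplifies to
\[
\lim_{t\to 0^+}\Delta(K_t,\bar f)=\frac{2^n}{n^2}\sum_{i=1}^{n-1}\Delta(C_2^{in},\bar f|_{F_{in}}),
\]
where $F_{in}=\operatorname{span}(e_i,e_n)$ and $C_2^{in}=[-1,1]^2\subset F_{in}$ is the unit square in that coordinate 2-plane. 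Each summand is $\ge 0$ by the two-dimensional base case, completing the induction.

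For the base case $n=2$, one needs $V(g,g)\le 4\,g(e_1)\,g(e_2)$ for every $g\in D_s(S^1)$. Using $\pi$-periodicity from origin-symmetry, expand $g(\theta)=a_0+\sum_{k\ge 1}(a_k\cos 2k\theta+b_k\sin 2k\theta)$ and apply the standard 2D curvature formula $V(g,g)=\pi a_0^2-\frac{\pi}{2}\sum_{k\ge 1}(4k^2-1)(a_k^2+b_k^2)$. The target rearranges to a positive-semidefinite quadratic form in the Fourier coefficients: the sine coefficients contribute non-negatively; the cosine part splits by parity of $k$, and each sub-sum is handled by weighted Cauchy--Schwarz with weights $1/(4k^2-1)$, using the classical evaluations $\sum_{k\,\text{odd}\ge 1}1/(4k^2-1)=\pi/8$ and $\sum_{k\,\text{even}\ge 2}1/(4k^2-1)=\tfrac12-\tfrac{\pi}{8}$. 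The main obstacle is this two-dimensional Fourier computation; as a shortcut one could instead invoke the two-dimensional log-Brunn-Minkowski inequality of \cite{BLYZ}, from which 2D LLBM is a standard consequence.
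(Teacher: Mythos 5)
Your proposal is correct, and it takes a genuinely different route from the paper. Let me verify the key claims and then compare.

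Your observation that the middle term $\tfrac{1}{n}V(K_t[n-1],\bar f^2 h_{I_n}/h_{K_t}^2)$ vanishes identically for $t\in(0,1]$ is right: $S_{K_t}$ is concentrated on $\{\pm e_1,\dots,\pm e_n\}$, $h_{I_n}$ vanishes at $\pm e_i$ for $i<n$, and $\bar f(\pm e_n)=0$ by the computation $c=f(e_n)$. Since $K=K_t+(1-t)I_n$, \eqref{Eqn16} makes the squared term vanish along the whole path, and $P_{I_n^\perp}K_t=C_{n-1}$ is $t$-independent, so integrating gives exactly your displayed identity. Your explicit boundary limit is also correct: $\lim_{t\to 0^+}\Delta(K_t,\bar f)=-\tfrac{n-1}{n}V(C_0[n-2],\bar f,\bar f)$ (the Jacobi term and the $h_{C_\varepsilon}$-term both go to zero because $\bar f(\pm e_n)=0$ and $S_{K_t}$ is discrete), and iterated projection of $C_0=I_1+\cdots+I_{n-1}$ gives $V(C_0[n-2],\bar f,\bar f)=\tfrac{2^{n-1}}{n(n-1)}\sum_i V_2(\bar f|_{F_{in}},\bar f|_{F_{in}})$. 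Since $\Delta(C_2^{in},g)=-\tfrac12 V_2(g,g)$ when $g(\pm e_n)=0$, this matches your formula $\tfrac{2^n}{n^2}\sum_i\Delta(C_2^{in},\bar f|_{F_{in}})$. The 2D Fourier argument also checks out: the odd-$k$ cosine block is handled by Cauchy--Schwarz with $\sum_{k\ \text{odd}}(4k^2-1)^{-1}=\pi/8$, and the even-$k$ block together with $a_0$ yields the perfect square $\bigl(\sqrt{4-\pi}\,a_0+\tfrac{4}{\sqrt{4-\pi}}\sum_{k\ \text{even}}a_k\bigr)^2$.

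The paper's proof is different in two places. First, it does not notice that the middle term vanishes exactly for the cube; instead it bounds $\Delta(C_\varepsilon,\bar f)$ from below using the Alexandrov--Fenchel inequality to absorb the $V(C_\varepsilon[n-2],\bar f,\bar f)$ term, then takes a $\liminf$ using only $\bar f(\pm e_n)=0$. Second, the paper's dimensional recursion bottoms out at $n=1$ (Lemma~\ref{Lem4}), whereas you bottom out at $n=2$ with a direct Fourier computation (which, as you note, could be replaced by citing the $2$D log--BM of \cite{BLYZ}). Your route avoids Alexandrov--Fenchel entirely and produces a cleaner exact identity expressing $\Delta(C,f)$ as a manifestly non-negative combination of a $2$D and an $(n-1)$D deficit; the cost is the additional $2$D Fourier lemma. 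The paper's route is shorter given that Minkowski's second inequality for $\bar f\in D_s$ is treated as standard, but it yields only a one-sided bound in the $\varepsilon\to 0$ limit. Both proofs have the same implicit induction on dimension built in through the hypothesis of Lemma~\ref{Lem2}.
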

\begin{proof}
Let $J=\{0\}^{n-1}\times [-1,1]$. 
Fix $\varepsilon\in(0,1)$.
Now apply Lemma \ref{Lem2} to $K=C$   and $I=(1-\varepsilon)J$.  We have 
$\Delta(C, f)\geq \Delta(C\div (1-\varepsilon)J,\bar f)$  for $\bar f  = f-ch_C$ where 
$$
c = n\frac{V(C[n-1],f)}{V(C)}-(n-1)\frac{V(C[n-2],J,f)}{V(C[n-1],J)}.
$$
Note that $\bar f$ is independent of $\varepsilon$.
Let us give the explicit value of $c$.
Let $C_t=C\div J+tJ$ for $t\in [0,1]$. Note that $C_0=[-1,1]^{n-1}\times \{0\}=P_{J^\perp}C$ has zero volume. We have 
\begin{align*}
c =&    n\frac{V(C_0[n-1],f) -(n-1)V(C_0 [n-2],J,f)}{nV(C_0[n-1],J)}+(n-1)\frac{V(C_0[n-2],J,f)}{V(C_0 [n-1],J)} \\
& \quad = \frac{V(C_0[n-1],f) }{V(C_0[n-1],J)},
\end{align*}
where we used $V(J,J,\cdot,\dots,\cdot)=0$.
Since $C_0$ is a $n-1$ dimensional cube,  we have $V(C_0[n-1],f)=\frac{2^{n-1}}{n}(f(e_n)+f(-e_n))=\frac{2^{n}}{n}f(e_n)$ and  $V(C_0[n-1],J) = \frac{2^{n}}{n}$. Hence $c=f(e_n)$. Therefore, $\bar f(-e_n)=\bar f( e_n)=f( e_n)-c h_{C}( e_n)=0$.
It follows that $V(C_0[n-1],\bar f)= 0$.
Therefore, 
\begin{align*}
    \Delta(C_\varepsilon,\bar f )=&\frac{V(C_\varepsilon[n-1],\bar f)^2}{V(C_\varepsilon)}-\frac{n-1}{n } V(C_\varepsilon[n-2],\bar f,\bar f )-\frac{1}{n} V(C_\varepsilon[n-1],\tfrac{\bar f^2}{h_{C_\varepsilon}})\\
    \geq & \frac{1}{n}\frac{V(C_\varepsilon[n-1],\bar f)^2}{V(C_\varepsilon)}-\frac{1}{n} V(C_\varepsilon[n-1],\tfrac{\bar f^2}{h_{C_\varepsilon}})\\
    =&\frac{1}{n}\frac{(V(C_0[n-1],\bar f)+(n-1)\varepsilon V(C_0[n-2],J,\bar f))^2}{V(C_0)+n\varepsilon V(C_0[n-1],J)}-\frac{1}{n} V(C_\varepsilon[n-1],\tfrac{\bar f^2}{h_{C_\varepsilon}})\\
    =& \varepsilon\frac{ (n-1)^2V(C_0[n-2],J,\bar f)^2}{n^2 V(C_0[n-1],J)} -\frac{1}{n} V(C_\varepsilon[n-1],\tfrac{\bar f^2}{h_{C_\varepsilon}}),
\end{align*}
where we used the Alexandrov-Fenchel inequality and expanded the mixed volumes involving $C_\varepsilon=C_0+\varepsilon J$.
Therefore, 
$$
\liminf_{\varepsilon\to 0} \Delta(C_\varepsilon,\bar f)\geq -  \limsup_{\varepsilon\to 0} \frac{1}{n} V(C_\varepsilon[n-1],\tfrac{\bar f^2}{h_{C_\varepsilon}})=0
$$
because $\bar f(\pm e_n)=0$.
By Lemma \ref{Lem2}, $\Delta(C,f)\geq \liminf_{\varepsilon\to 0}\Delta(C_\varepsilon,\bar f)\geq 0$, which completes the proof.

\end{proof}

\begin{proposition}\label{Prop1}
Assume  $D(K,f)\geq 0$ holds for all zonotopes $K\in\mathcal{K}^{n-1}_s$ and $f\in D_s(S^{n-2})$ ($n\geq 2$). Then $D(K,f)\geq 0$ holds for all zonotopes $K\in\mathcal{K}^{n}_s$ and $f\in D_s(S^{n-1})$.
\end{proposition}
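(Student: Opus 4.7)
The plan is to peel segments off an arbitrary zonotope one at a time via Lemma \ref{Lem2}, reducing to a parallelotope, and then to finish with Lemma \ref{Lem3} together with the $\operatorname{GL}(n,\R)$-covariance of LLBM. The latter is immediate from the identity $A(K^{1-t}L^t)=(AK)^{1-t}(AL)^t$ combined with $V(A\,\cdot\,)=|\det A|\,V(\cdot)$, and already implies (as noted in the introduction after Lemma \ref{Lem3}) that LLBM extends from the cube to every parallelotope $P\in\mathcal{K}^n_s$.

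Given a zonotope $K\in\mathcal{K}^n_s$, I would write $K=I_1+\cdots+I_N$ with origin-symmetric segments $I_j$. Full-dimensionality of $K$ allows, after relabelling, to assume $I_1,\ldots,I_n$ have linearly independent directions, so that $P:=I_1+\cdots+I_n$ is a parallelotope in $\mathcal{K}^n_s$. Set $K_0:=P$ and $K_\ell:=P+I_{n+1}+\cdots+I_{n+\ell}$; then $K_{N-n}=K$, each $K_\ell$ lies in $\mathcal{K}^n_s$, and $K_\ell\div I_{n+\ell}=K_{\ell-1}$ by Minkowski cancellation for convex bodies. Moreover $P_{I_{n+\ell}^\perp}K_\ell$ is a zonotope with nonempty interior in the hyperplane $I_{n+\ell}^\perp\cong\R^{n-1}$, so the induction hypothesis of the proposition gives $\Delta(P_{I_{n+\ell}^\perp}K_\ell,g)\geq 0$ for every $g\in D_s(S^{n-2})$.

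With all hypotheses of Lemma \ref{Lem2} in place at each step, I would iterate it with $K\leftarrow K_\ell$, $I\leftarrow I_{n+\ell}$ from $\ell=N-n$ down to $\ell=1$, producing a sequence $f_{N-n}:=f,\,f_{N-n-1},\ldots,f_0\in D_s(S^{n-1})$ (with $f_{\ell-1}=f_\ell-c_\ell h_{K_\ell}$) satisfying
\[
\Delta(K,f)=\Delta(K_{N-n},f_{N-n})\geq\Delta(K_{N-n-1},f_{N-n-1})\geq\cdots\geq\Delta(P,f_0)\geq 0,
\]
the final inequality coming from the first paragraph. There is no real obstacle beyond bookkeeping: Lemma \ref{Lem2} does the genuine work, and the remaining task is simply isolating a parallelotope sub-summand, peeling segments off one by one via Minkowski cancellation, and checking that each intermediate projection is a full-dimensional $(n-1)$-dimensional zonotope to which the dimension-induction hypothesis directly applies.
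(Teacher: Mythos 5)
Your proof is correct and follows essentially the same approach as the paper: peel segments off via Lemma \ref{Lem2} to reduce to a parallelotope, then conclude with Lemma \ref{Lem3} and $\operatorname{GL}(n,\R)$-covariance. You handle one point somewhat more carefully than the paper by first fixing a parallelotope sub-summand $P$ and only peeling off the remaining segments, which guarantees that each intermediate body $K_\ell$ (and hence $K_\ell\div I_{n+\ell}$) stays in $\mathcal{K}^n_s$ as Lemma \ref{Lem2} requires; the paper's induction on the number of summands implicitly assumes the peeled segment is chosen so that the remainder is still full-dimensional.
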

\begin{proof}
    Let $K=I_1+\cdots +I_\ell$ with $I_j$ being origin symmetric segments with distinct directions. 
    We induct on the value of $\ell$.
    Since $K$ is $n$-dimensional, $\ell\geq n$.
     
     For the base case $\ell=n$,  $K=A(C)$ for some non-degenerate linear transform $A$, where $C=[-1,1]^n$. We have $\Delta(K,f)=\Delta(A(C),f) = |\det(A)| \Delta(C, f\circ A^{-t})\geq 0$ by \eqref{Eqn28} and  Lemma \ref{Lem3}.

     Assume the conclusion is proved for $\ell-1$, let's prove it for $\ell$.
     Since $P_{I_\ell^\perp}K\in \mathcal{K}^{n-1}_s $ is a  zonotope, by hypothesis we have $\Delta(P_{I_\ell^\perp}K,g)\geq 0$ for all $g\in D_s(S^{n-2})$.  This allows us to apply Lemma \ref{Lem2} to $K$ and $I_\ell$, obtaining $\Delta(K,f)\geq \Delta(K\div I_\ell, \bar f)$ for some $\bar f\in D_s(S^{n-1})$. By the induction hypothesis for $\ell-1$, $\Delta(K\div I_\ell, \bar f)\geq 0$ since $\bar f\in D_s(S^{n-1})$. Therefore $\Delta(K,f)\geq 0$ holds as desired.
\end{proof}

\begin{theorem}
     $\Delta(K,f)\geq 0$
    holds for all zonoids $K\in\mathcal{K}^n_s$ and all $f\in D_s(S^{n-1})$.
\end{theorem}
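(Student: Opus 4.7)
The plan is to combine an outer induction on the dimension $n$ with Proposition \ref{Prop1}, which already handles the passage from dimension $n-1$ to $n$ at the level of zonotopes via induction on the number of segment summands, and then to pass from zonotopes to zonoids by a continuity argument.

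First I would treat the base case $n=1$. Any origin-symmetric convex body in $\R$ is already a zonoid, in fact a symmetric segment $[-a,a]$, so Lemma \ref{Lem4} gives $\Delta(K,f)=0$ for all $f\in D_s(S^0)$. Now assume inductively that $\Delta(K,f)\geq 0$ holds for every zonoid $K\in\mathcal{K}^{n-1}_s$ and every $f\in D_s(S^{n-2})$. In particular this is true for zonotopes in $\mathcal{K}^{n-1}_s$, so the hypothesis of Proposition \ref{Prop1} is satisfied, and Proposition \ref{Prop1} yields $\Delta(K,f)\geq 0$ for every zonotope $K\in\mathcal{K}^n_s$ and every $f\in D_s(S^{n-1})$.

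Finally I would upgrade this from zonotopes to arbitrary zonoids by approximation. Every zonoid $K\in\mathcal{K}^n_s$ is a Hausdorff limit of origin-symmetric zonotopes $K_m$; since $K$ has nonempty interior, one may assume each $K_m\in\mathcal{K}^n_s$ and that all $h_{K_m}$ are bounded away from zero on $S^{n-1}$ by a common positive constant. Writing $f=h_A-h_B$ with $A,B\in\mathcal{K}^n_s$, the first two terms $V(K_m[n-1],f)^2/\Vol(K_m)$ and $V(K_m[n-2],f,f)$ are continuous in $K_m$ by multilinearity and the continuity of mixed volumes under Hausdorff convergence \cite[Thm.~5.1.7]{Schneider}. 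For the last term $V(K_m[n-1],f^2/h_{K_m})$, note that $h_{K_m}\to h_K$ uniformly on $S^{n-1}$ and stays uniformly away from zero, so $f^2/h_{K_m}\to f^2/h_K$ uniformly; combined with weak convergence of the mixed area measures $S_{K_m,\dots,K_m}\to S_{K,\dots,K}$, the integral representation \eqref{Eqn13} gives convergence of this term too. Hence $\Delta(K_m,f)\to\Delta(K,f)$, and nonnegativity passes to the limit.

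The genuinely hard work has already been discharged before the statement: the monotonicity formula \eqref{Eqn2bis} and its packaged version Lemma \ref{Lem2} drive the induction on the number of segment summands, while Lemma \ref{Lem3} handles the cube, giving the base case for Proposition \ref{Prop1}. Given these ingredients, the only remaining points to verify carefully are that the inductive hypothesis is correctly applied to projections (which is immediate, since a projection of a zonoid is a zonoid in one lower dimension), and that the limiting argument accommodates the $f^2/h_K$ term; both are routine once the setup is in place.
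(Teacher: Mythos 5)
Your proof is correct and follows essentially the same route as the paper: base case $n=1$ via Lemma \ref{Lem4}, inductive step via Proposition \ref{Prop1}, and the passage from zonotopes to zonoids by approximation using continuity of mixed volumes. The only cosmetic difference is the order of operations (the paper reduces to zonotopes first and then inducts, whereas you interleave the approximation into the induction), and you helpfully spell out the continuity of the $f^2/h_{K_m}$ term, which the paper leaves as a ``standard approximation argument.''
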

\begin{proof}
    By a standard approximation argument, it suffices to prove the claim for zonotopes $K\in\mathcal{K}^n_s$. 
    We induct on the dimension $n$. The case $n=1$ is dealt with in Lemma \ref{Lem4} and the induction step is given by Proposition \ref{Prop1}, which completes the proof.
\end{proof}
 
\section{Equality cases for smooth bodies with $C^2$ support functions}
\label{sec4}
In this section, we will assume the LLBM inequality is true and study the  equality cases of LLBM for smooth bodies with $C^2$ support functions.
It turns out that the monotonicity principle proved in Theorem \ref{Thm3} can be adapted to prove that only trivial equality cases appear in this case.

\subsection{}

Recall that a convex body is called smooth if all of its boundary points have a unique unit normal vector.
For two convex bodies $K$, $L$, 
 denote the Minkowski subtraction  by $K\div L:=\{x\in\R^n\mid L+x\subseteq K\}$.
 The following fact is a strengthening of    \cite[Lemma 7.5.4]{Schneider}. Schneider's original statement didn't involve the  uniform convergence part, which  will be necessary in the proof of  Proposition \ref{Prop2}.  Note also that  Fact \ref{Fact1} only assumes $K$ is full dimensional but not $L$, which is  the case  we need.   The proof of Fact \ref{Fact1} follows principally the same lines as Schneider’s proof. 

 \begin{definition}
     For two convex bodies $K,L$, we say that $L$ is adapted to $K$ if for every $x\in \partial K$ there exists $y\in \partial L$ such that $N(K,x)\subseteq N(L,y)$.
 \end{definition}
 Note that if $K$ is smooth, then any body $L$ is adapted to $K$.
\begin{fact} 
\label{Fact1}
    Let $K,L\subseteq \R^n$ be convex bodies. Assume $K$ is full dimensional and $L$ is adapted to $K$.
    Let 
    $$
    K_t:=\left\{
    \begin{array}{cc}
       K+tL    ,  & t\geq 0, \\
       K\div (-t)L,  & t<0.
    \end{array}
    \right.
    $$
    Then
    $$
    \lim_{t\to 0} \left\|\frac{h_{K_{t}}-h_{K}}{t}-h_L\right\|_{L^\infty(S^{n-1})}=0.
    $$
    In particular, 
     $$
    \frac{d}{dt} \Bigr|_{t=0}h_{K_t}  =  h_L.
    $$
\end{fact}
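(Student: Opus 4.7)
The case $t \geq 0$ is immediate from the identity $h_{K+tL} = h_K + t h_L$, which makes $(h_{K_t} - h_K)/t - h_L$ identically zero. I focus on $t < 0$; setting $s = -t > 0$ and $M_s = K \div sL$, the goal is to prove $\sup_{u \in S^{n-1}} |(h_K(u) - h_{M_s}(u))/s - h_L(u)| \to 0$ as $s \to 0^+$. One direction is immediate from the definition of Minkowski difference: any $x \in M_s$ satisfies $x + sL \subseteq K$, hence $\langle x, u\rangle + s h_L(u) \leq h_K(u)$, giving $(h_K - h_{M_s})/s \geq h_L$ pointwise on $S^{n-1}$.

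The plan is to upgrade pointwise convergence to uniform convergence via Dini's theorem. First I would prove concavity of $s \mapsto h_{M_s}(u)$ on the interval where $M_s$ is nonempty, by verifying the inclusion $\lambda M_{s_1} + (1-\lambda) M_{s_2} \subseteq M_{\lambda s_1 + (1-\lambda) s_2}$ (which uses the identity $\lambda s_1 L + (1-\lambda)s_2 L = (\lambda s_1 + (1-\lambda)s_2) L$, valid for convex $L$ and nonnegative scalars). Concavity together with $h_{M_0}(u) = h_K(u)$ forces $(h_K(u) - h_{M_s}(u))/s$ to be non-decreasing in $s > 0$ for each $u$, so the pointwise limit $Q(u) := \lim_{s \to 0^+} (h_K(u) - h_{M_s}(u))/s$ exists and equals $\inf_{s > 0}(h_K(u) - h_{M_s}(u))/s$. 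Since $h_L$ is continuous on the compact $S^{n-1}$ and $(h_K - h_{M_s})/s$ is continuous in $u$, Dini's theorem will then give uniform convergence once $Q = h_L$ is established pointwise.

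For the pointwise lower bound $Q(u) \leq h_L(u)$, I fix $u$, choose a supporting point $x_u \in \partial K$ with outer normal $u$, and invoke adaptedness to pick $y_u \in \partial L$ with $N(K, x_u) \subseteq N(L, y_u)$; in particular $u \in N(L, y_u)$, so $\langle y_u, u\rangle = h_L(u)$. The candidate $p_s := x_u - s y_u$ satisfies $\langle p_s, u\rangle = h_K(u) - s h_L(u)$, and for $z \in L$ one has $p_s + sz = x_u + s(z - y_u)$ with $z - y_u \in L - y_u \subseteq T_L(y_u) \subseteq T_K(x_u)$; here $T$ denotes the tangent cone and the last inclusion is the polar of $N(K, x_u) \subseteq N(L, y_u)$. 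A standard convex-geometric argument, approximating $w \in T_K(x_u)$ by $\lambda(y - x_u)$ with $y \in K$, shows $d(x_u + sw, K)/s \to 0$ as $s \to 0^+$ for every $w \in T_K(x_u)$; convexity of $d(\cdot, K)$ makes $s \mapsto d(x_u + sw, K)/s$ non-decreasing in $s$, so Dini applied over the compact set $L$ yields $\epsilon(s) := \sup_{z \in L} d(x_u + s(z - y_u), K)/s \to 0$. Then, using full-dimensionality of $K$, I fix a ball $B(c, r) \subseteq K$ and define the perturbed candidate $p_s^* := (1 - \mu) p_s + \mu c$ with $\mu = O(s\epsilon(s)/r)$; writing $p_s^* + sz = (1-\mu)(x_u + s(z - y_u)) + \mu(c + sz)$ as a convex combination and using that $x_u + s(z - y_u)$ lies within $s\epsilon(s)$ of $K$ while $B(c + sz, r/2) \subseteq K$ for small $s$, the perturbation is absorbed into the interior ball, so $p_s^* \in M_s$. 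Then $\langle p_s^*, u\rangle \geq h_K(u) - s h_L(u) - O(s\epsilon(s))$, yielding $Q(u) \leq h_L(u)$.

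The principal technical obstacle is ensuring the uniform-in-$z$ decay $\epsilon(s) \to 0$, which is not automatic from pointwise tangent-cone convergence; the key observation is the monotonicity of $s \mapsto d(x_u + sw, K)/s$, which is exactly the hypothesis Dini requires. The interior-ball perturbation step is where full-dimensionality of $K$ enters essentially, and care is needed to quantify that the loss is $o(s)$ rather than $O(s)$.
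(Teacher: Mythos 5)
Your proof is correct, but it takes a genuinely different route from the one in the paper. The paper follows Schneider's argument for \cite[Lemma 7.5.4]{Schneider}: for each $u$ and small $t$ it picks a maximizer $z\in F(K\div tL,u)$, shows by a separation argument that $u$ lies in the positive hull of the touching directions $U_t(u)$, writes $u=\sum_i\lambda_i v_i$ via Carath\'eodory, bounds the deficit $\frac{h_K(u)-h_{K\div tL}(u)}{t}-h_L(u)$ by $\sum_i\lambda_i h_L(v_i)-h_L(u)$, and then obtains uniform decay of this bound by a compactness/contradiction argument, where full dimensionality is used to rule out a lineality space in the limiting cone (which keeps the $\lambda_{i,j}$ bounded) and adaptedness is used only at the last step to force $u,v_1,\dots,v_n$ into a single normal cone of $L$. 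You instead split the problem as pointwise convergence plus uniformity: the inclusion $\lambda M_{s_1}+(1-\lambda)M_{s_2}\subseteq M_{\lambda s_1+(1-\lambda)s_2}$ makes $s\mapsto h_{M_s}(u)$ concave, so the difference quotients are monotone in $s$ and Dini's theorem converts pointwise convergence to the uniform statement; the pointwise upper bound is then an explicit construction of a near-optimal point $p_s^*\in K\div sL$, where adaptedness enters through the dualized inclusion $T_L(y_u)\subseteq T_K(x_u)$ and full dimensionality enters through the interior-ball absorption of the $o(s)$ error (with a second application of Dini, again via monotonicity of $s\mapsto d(x_u+sw,K)/s$, to make the tangent-cone error uniform over $z\in L$). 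The details you leave implicit (nonemptiness of $M_s$ for small $s$, hence continuity of $h_{M_s}$; the quantitative choice $\mu\approx 2s\varepsilon(s)/r$; passing from sequences to the continuous parameter in Dini) are routine and correctly flagged. Your approach is more constructive at each fixed $u$ and gets uniformity essentially for free from concavity, whereas the paper's argument controls the error uniformly from the start at the cost of the Carath\'eodory decomposition and the compactness/contradiction step; both proofs use adaptedness and full dimensionality essentially, just in different places.
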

\begin{proof}
    Since $h_{K_t}=h_K+th_L$ for $t\geq 0$, the desired conclusion is obvious for $t\rightarrow 0+$. Therefore, in the following we only consider the limit $t\to 0-$.
    
    \textbf{Step 1}:
    Let $u\in S^{n-1}$. For all  small $t>0$  such that $K\div tL\neq \emptyset$,  take $z\in F(K\div tL, u)$.
    Let 
    $$
    U_t(u):=\{v\in S^{n-1}\mid h_K(v)=h_{tL+z}(v)\}.
    $$
    We claim that 
    \begin{equation}
    \label{Eqn32}
        u\in \operatorname{pos} U_t(u)
        := \Bigr\{\sum_{i=1}^\ell \lambda_i v_i : \lambda_i\geq 0,\, v_i\in U_t (u), \ell\in \N\Bigr\}.
    \end{equation}
    Indeed, if this were false, then $\operatorname{pos}U_t(u)\setminus \{0\}$ and $u$ are  separated by a hyperplane. That is, there exists $w\in \R^n$ such that $\la w,u\ra>0$ and $\la w, v\ra<0$ for all $v\in U_t(u)\setminus \{0\}$.
    Therefore, $h_K-h_{tL+z}$ is positive on $S^{n-1}\cap\{x:\la x,w\ra\geq 0\}$. In particular, there exists $\varepsilon>0$ such that 
    $$
    h_K(v)-h_{tL+z}(v)\geq \varepsilon \la w,v\ra
    $$
    holds for all $v\in S^{n-1}$. This implies $tL+z+\varepsilon w\subseteq K$. Hence $z+\varepsilon w\in K\div tL $. However, $\la z+\varepsilon w, u\ra >\la z,u\ra$. A contradiction to the choice of $z$.

    \textbf{Step 2}:
    By \eqref{Eqn32} and Carathéodory's theorem, we may choose $v_1,...,v_n\in U_t(u)$ and $\lambda_1,...,\lambda_n\geq 0$ such that $u=\sum_{i=1}^n \lambda_i v_i$.
    We have 
    \begin{align}
    \nonumber
        &\frac{h_K(u)-h_{K\div tL}(u)}{t} -h_L(u)=\frac{h_K(u)-h_{K\div tL+tL}(u)}{t} \\
        \nonumber
        & \qquad\leq \frac{h_K(u)-h_{z+tL}(u)}{t}
        \leq \frac{\sum_{i=1}^n \lambda_i h_K(v_i)-h_{z+tL}(u)}{t}\\
        &\qquad =\frac{\sum_{i=1}^n \lambda_i h_{z+tL}(v_i)-h_{z+tL}(u)}{t}=
        \sum_{i=1}^n \lambda_i h_{L}(v_i)-h_{L}(u),
    \end{align}
    where for the first inequality we used $z\in K\div tL$ and for the second inequality the convexity of support functions.
    Combine this with the fact that $K\supseteq K\div tL+tL$, we obtain 
    $$
    0\leq \frac{h_K(u)-h_{K\div tL}(u)}{t} -h_L(u) \leq   \sum_{i=1}^n \lambda_i h_{L}(v_i)-h_{L}(u).
    $$
    
    \textbf{Step 3}:
    We claim that (note that both $\lambda_i$ and $v_i$ depend on $u$)
    $$
    \lim_{t\to 0} \sup_{u\in S^{n-1}} \Bigr|\sum_{i=1}^n \lambda_i h_{L}(v_i)-h_{L}(u)\Bigr|=0.
    $$
    Assume the converse. Then there exists a number $\alpha>0$, a sequence $(t_j)_{i=1}^\infty$ converging to $0$, a sequence $(u_j)_{i=1}^\infty$ in $S^{n-1}$,
    a sequence $(z_j)_{i=1}^\infty$,  sequences  $(v_{i,j})_{i=1}^\infty$, $i=1,...,n$,  and sequences of positive numbers $(\lambda_{i,j})_{1\leq i\leq n, 1\leq j< \infty}$, such that 
    \begin{align}
    \nonumber
    &z_j\in F(K\div t_jL, u_j), \quad 
        v_{i,j}\in U_{t_j}(u_j)=\{v\in S^{n-1}: h_K(v)=h_{t_jL+z_j}(v)\}\\
       & u_j =   \sum_{i=1}^n \lambda_{i,j} v_{i,j}, \quad  \sum_{i=1}^n \lambda_{i,j} h_{L}(v_{i,j})-h_{L}(u_j)>\alpha.
       \label{Eqn34}
    \end{align}
    Up to passing to a subsequence, we may assume $u_j$ converges to $u\in S^{n-1}$, $z_j$ converges to $z\in F(K,u)$, and $v_{i,j}$ converges to $v_i$ for every $i$.  From $v_{i,j}\in U_{t_j}(u_j)$ we have $h_K(v_i)=h_{z}(v_i)$. Hence $v_i\in N(K,z)$.

    By applying Blaschke selection theorem  to $\operatorname{pos}(U_{t_j}(u_j))\cap S^{n-1}$ we may assume that after passing to a subsequence, 
    $\operatorname{pos}(U_{t_j}(u_j))\cap S^{n-1}$ converges to $P\cap S^{n-1}$, where $P$ is a closed convex cone.

    We claim $P$ does not contain any lineality space. Indeed, assume this is not true, then there exist $\ell\in [n]$ and sequences  $(w_{j}^{k})_{1\leq j< \infty}$  for $1\leq k\leq \ell$, such that $w_{j}^k\in U_{t_j}(u_j)$, $\lim_{j\to \infty} w_j^k=w^k$, and $\operatorname{pos}(w^k,k\in [\ell])$ is equal to the lineality space of $P$. However, from $w_j^k\in U_{t_j}(u_j)$ we have $h_K(w_j^k) = h_{t_j L+z_j } (w_j^k)$. Letting $j$ tend to infinity we obtain $h_K(w^k) = \la z, w^k\ra$, i.e., $w^k \in N(K,z)$. This is a contradiction since $N(K,u)$ does not contain any lineality space due to the full dimensionality assumption on $K$.

    By our previous claim, there exists $b\in S^{n-1}$ and $\delta>0$ such that  $P\subseteq \{v: \la b,v\ra \geq \delta \|v\|\}$. Using this and $u_j=\sum_{i=1}^n \lambda_{i,j} v_{i,j}$, we obtain that for sufficiently large $j$, 
    $$
    1\geq \la b, u_j\ra =\sum_{i=1}^n \lambda_{i,j} \la b, v_{i,j} \ra \geq \delta/2 \sum_{i=1}^n \lambda_{i,j} .
    $$
    Therefore, $\lambda_{i,j} $ are uniformly bounded and we may assume $\lim_{j\to \infty } \lambda_{i,j}=\lambda_i$ by passing to a subsequence.
    By \eqref{Eqn34} we have
    $$
    \sum_{i=1}^n \lambda_i h_L(v_i)-h_L(u)>\alpha.
    $$
    On the other hand,
    by adaptedness we obtain that there exists $y\in \partial L$ such that $N(K,z)\subseteq N(L,y)$.  Since $z\in F(K,u)$ and  $v_i\in N(K,z)$ holds for all $i$, we have $u, v_1,...,v_n\in N(L,y)$.  Hence $\sum_{i=1}^n \lambda_i h_L(v_i)-h_L(u)=0$, a contradiction.

\end{proof}
Another fact we will use follows from the case $m=n-1$ of  \cite[Theorem 4.5.3]{Schneider}.
\begin{fact}\label{Fact2}
    Let $K$ be a smooth body, then the support of its area measure $S_K$ is $S^{n-1}$.
\end{fact}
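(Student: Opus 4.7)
The plan is to reduce Fact \ref{Fact2} to the geometric realization of the area measure as a push-forward of surface measure on $\partial K$ under the Gauss map, and then exploit continuity plus surjectivity of that map under the smoothness hypothesis. Concretely, I would show that $S_K(V) > 0$ for every non-empty open $V \subseteq S^{n-1}$, which is equivalent to $\operatorname{supp}(S_K) = S^{n-1}$.

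Recall the reverse spherical image formula: for any convex body $K$ and any Borel set $\omega \subseteq S^{n-1}$,
\[
S_K(\omega) \;=\; \mathcal{H}^{n-1}\bigl(\tau(K,\omega)\bigr), \qquad \tau(K,\omega) := \{\,x\in\partial K : N(K,x)\cap \omega \ne \emptyset\,\}.
\]
Under the smoothness hypothesis, $N(K,x)$ is a single ray for every $x\in\partial K$, so $\tau(K,V) = \nu_K^{-1}(V)$, where $\nu_K:\partial K\to S^{n-1}$ is the (now single-valued) Gauss map. Thus the task reduces to proving that $\nu_K^{-1}(V)$ is a non-empty open subset of the topological $(n-1)$-manifold $\partial K$, since any such subset has strictly positive $(n-1)$-Hausdorff measure.

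Two standard facts handle this. First, smoothness of $K$ is equivalent to $h_K$ being $C^1$ on $\R^n\setminus\{0\}$, and it is classical that the associated Gauss map is then continuous (see \cite[Cor.~1.7.3]{Schneider}); hence $\nu_K^{-1}(V)$ is open in $\partial K$. Second, the Gauss map of any convex body is surjective: for a chosen $u \in V$, the functional $\langle\cdot,u\rangle$ attains its maximum on $K$ at some boundary point $x$, and smoothness forces $\nu_K(x) = u$, so $x \in \nu_K^{-1}(V)$. Combining these gives $S_K(V) = \mathcal{H}^{n-1}(\nu_K^{-1}(V)) > 0$, as required.

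No serious obstacle is anticipated; the only non-trivial input is the continuity of $\nu_K$ under smoothness, which is standard. A one-line alternative is to invoke \cite[Thm.~4.5.3]{Schneider} with $m = n-1$ and $K_1 = \cdots = K_{n-1} = K$, which characterizes the support of mixed area measures via normal cones of the constituent bodies and immediately specializes to the statement.
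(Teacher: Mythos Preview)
Your proposal is correct. The paper itself does not supply an argument for this fact: it simply invokes the case $m=n-1$ of \cite[Thm.~4.5.3]{Schneider}, which is precisely the one-line alternative you mention at the end. Your main route is therefore strictly more than what the paper offers --- a self-contained verification via the reverse spherical image formula $S_K(\omega)=\mathcal{H}^{n-1}(\tau(K,\omega))$ together with continuity and surjectivity of the Gauss map on a smooth body. This elementary argument avoids the machinery of extreme normal directions and touching cones that underlies Schneider's general characterization of $\operatorname{supp} S_{K_1,\dots,K_{n-1}}$, at the price of being specific to the unmixed case $K_1=\cdots=K_{n-1}=K$; the citation, by contrast, situates the statement within the general theory of mixed area measures but is a black box here.
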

We recall lastly a standard  measure theory fact.
\begin{lemma}\label{Lem6}
    Let $\mu_i$ be a sequence of Borel measures on $S^{n-1}$ and $f_n$ a sequence of functions. Assume $\mu_i$ converges weakly to a finite measure $\mu$ and $f_i$ converges uniformly to a continuous function $f$. Then $\int_{S^{n-1}} f_i d\mu_i $ converges to $\int_{S^{n-1}} f d\mu$.
\end{lemma}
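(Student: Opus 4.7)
The plan is to reduce the convergence assertion to two separate limits via the standard triangle inequality splitting
\begin{equation*}
\left|\int_{S^{n-1}} f_i\, d\mu_i - \int_{S^{n-1}} f\, d\mu\right|
\;\leq\;
\left|\int_{S^{n-1}} (f_i - f)\, d\mu_i\right|
\;+\;
\left|\int_{S^{n-1}} f\, d\mu_i - \int_{S^{n-1}} f\, d\mu\right|,
\end{equation*}
and then show that each term on the right tends to $0$.

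For the first term, I would estimate
\begin{equation*}
\left|\int_{S^{n-1}} (f_i - f)\, d\mu_i\right|
\;\leq\;
\|f_i - f\|_{L^\infty(S^{n-1})}\cdot \mu_i(S^{n-1}).
\end{equation*}
The factor $\|f_i - f\|_{L^\infty}$ tends to $0$ by the uniform convergence assumption. To handle $\mu_i(S^{n-1})$, I would apply the weak convergence $\mu_i \to \mu$ to the constant function $1$ (which is continuous and bounded on the compact set $S^{n-1}$), giving $\mu_i(S^{n-1}) \to \mu(S^{n-1}) < \infty$. In particular the sequence $\mu_i(S^{n-1})$ is bounded, so the product vanishes in the limit.

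For the second term, since $f$ is continuous on the compact set $S^{n-1}$ and $\mu_i$ converges weakly to $\mu$, the definition of weak convergence gives $\int f\, d\mu_i \to \int f\, d\mu$ directly. Combining the two limits yields the claim. There is no real obstacle here; the only mild point to note is that the finiteness of $\mu$ combined with weak convergence is what delivers the uniform boundedness of $\mu_i(S^{n-1})$ needed to absorb the uniform convergence of $f_i$.
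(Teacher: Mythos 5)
Your proof is correct and follows essentially the same route as the paper: the same triangle-inequality splitting, with the first term controlled by $\|f_i-f\|_{L^\infty(S^{n-1})}\,\mu_i(S^{n-1})$ and the second by weak convergence applied to the continuous function $f$. Your explicit remark that testing weak convergence against the constant function $1$ gives boundedness of $\mu_i(S^{n-1})$ is a point the paper leaves implicit, but the argument is the same.
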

\begin{proof}
    Indeed, 
    \begin{align*}
         \lim_{i\to \infty}\Bigr|&\int_{S^{n-1}} f_i d\mu_i -\int_{S^{n-1}} f d\mu\Bigr|\\
         &\leq
         \limsup_{i\to \infty}
    \Bigr|\int_{S^{n-1}} f_i-f d\mu_i \Bigr |
    + 
    \limsup_{i\to \infty}\Bigr|\int_{S^{n-1}} f d\mu_i -\int_{S^{n-1}} f d\mu\Bigr| \\
    &\leq \limsup_{i\to \infty}\|f_i-f\|_{L^\infty(S^{n-1})}\mu_i(S^{n-1})=0,
    \end{align*}
    where we used the weak convergence of $\mu_i$ to $\mu$ in the second inequality.

\end{proof}

\subsection{}
We are now ready to state the analogous equation to \eqref{Eqn2bis} for   parallel bodies $(K_t)_{t}$ obtained from $K$ and a segment $I$.
\begin{proposition}
\label{Prop2}
Let $K\subseteq \R^n$ be a smooth body containing the origin in its interior such that $h_K|_{S^{n-1}}$ is  $C^2$. Let $I$ be an origin-symmetric segment.  Define $K_t=K+tI$ for $t>0$ and $K_t=K\div (-t)I$ for $t\leq 0$.
Let $f$ be a $C^2$ function on $S^{n-1}$.
Then,
    \begin{align}\nonumber
        \frac{d}{dt}\Bigr|_{t=0}&\Delta(K_t,f)\\
        \nonumber
        =&  \frac{(n-1)^2}{n} \left(  \frac{V(K[n-2],I,f)^2}{V(K[n-1],I)} -\frac{n-2}{n-1} V(K[n-3],I,f,f)-\frac{1}{n-1} V(K[n-2],I,\tfrac{f^2}{h_{K}}) \right)\\
        \nonumber
        &- n V(K[n-1],I)\left( \frac{V(K[n-1],f)}{V(K)} - \frac{n-1}{n} \frac{V(K[n-2],I,f)}{V(K[n-1],I)}\right)^2 \\    
        \label{Eqn2}
        &+\frac{1}{n} V(K[n-1],\tfrac{f^2}{h_{K}^2}h_I).
    \end{align}
\end{proposition}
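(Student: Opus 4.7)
The identity \eqref{Eqn2bis} was derived for the one-sided family $K_t = K + tI$, $t \geq 0$, where $h_{K_t} = h_K + t h_I$ depends linearly on $t$; it is therefore a genuine polynomial identity in $t$, whose evaluation at $t = 0$ coincides with the RHS of \eqref{Eqn2} and delivers the right derivative $\tfrac{d}{dt}\big|_{t = 0^+} \Delta(K_t, f)$. The content of Proposition \ref{Prop2} is therefore to establish the existence and value of the left derivative at $t = 0$, where $K_t = K \div (-t)I$ is the inner parallel body.

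The essential input is Fact \ref{Fact1}: since $K$ is smooth and full-dimensional, $I$ is adapted to $K$, and the fact yields
\[
\phi_t \ :=\ \frac{h_{K_t} - h_K}{t} \ \longrightarrow\ h_I \qquad \text{uniformly on } S^{n-1}
\]
as $t \to 0^-$. I would then write each mixed volume in $\Delta(K_t, f)$ as an integral against a mixed area measure via \eqref{Eqn13} and \eqref{Eqn19}, and expand $S_{K_t, \dots, K_t}$ multilinearly in $h_{K_t} = h_K + t \phi_t$ (which is legitimate since $\phi_t = (h_{K_t} - h_K)/t$ is a difference of support functions, both $K_t$ and $K$ being convex bodies), giving
\[
S_{K_t, \dots, K_t} \ =\ S_{K, \dots, K} \,+\, (n-1)\, t\, S_{K[n-2], \phi_t} \,+\, O(t^2)
\]
in total variation, with analogous expansions for the measures involving $f$. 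Weak continuity of mixed area measures under uniform convergence of their arguments gives $S_{K[n-2], \phi_t} \to S_{K[n-2], I}$ weakly; combined with Lemma \ref{Lem6} this yields the left-sided derivatives
\[
\tfrac{d}{dt}\big|_{0^-} V(K_t^{n-1}, f) = (n-1) V(K[n-2], I, f), \qquad \tfrac{d}{dt}\big|_{0^-} V(K_t) = n V(K[n-1], I),
\]
and similarly for $V(K_t^{n-2}, f, f)$, each matching the corresponding right-sided value.

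The step I expect to require the most care is the last term $\tfrac{1}{n} V(K_t^{n-1}, f^2/h_{K_t})$, where $h_{K_t}$ appears nonlinearly inside the integrand $g_t := f^2/h_{K_t}$. I would handle this by using that $K$ contains the origin in its interior so $h_K \geq c > 0$ on $S^{n-1}$; together with the uniform convergence $h_{K_t} \to h_K$ this gives uniform convergence $g_t \to f^2/h_K$ and $(g_t - f^2/h_K)/t \to -f^2 h_I / h_K^2$. A product-rule application combined with Lemma \ref{Lem6} then yields
\[
\tfrac{d}{dt}\big|_{0^-} V\!\Big(K_t^{n-1}, \tfrac{f^2}{h_{K_t}}\Big) \ =\ (n-1)\, V\!\Big(K[n-2], I, \tfrac{f^2}{h_K}\Big) \,-\, V\!\Big(K[n-1], \tfrac{f^2 h_I}{h_K^2}\Big),
\]
again coinciding with the right-sided value. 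Assembling all the one-sided derivatives via the quotient rule for $V(K_t^{n-1}, f)^2 / V(K_t)$ and performing the algebraic rearrangement already exhibited in \eqref{Eqn2bis} delivers the formula \eqref{Eqn2}.
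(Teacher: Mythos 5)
Your overall structure is right: reduce to the left derivative at $t=0$ (the right derivative being the direct computation behind \eqref{Eqn2bis}), invoke Fact~\ref{Fact1} for the uniform convergence $\phi_t := (h_{K_t}-h_K)/t \to h_I$, and split the nonlinear term $V(K_t[n-1], f^2/h_{K_t})$ via a product rule, using uniform convergence of $(g_t - f^2/h_K)/t$ and Lemma~\ref{Lem6} for the integrand piece — all of this matches the paper. The gap is in the way you differentiate the \emph{linear} mixed volume terms. You expand the mixed area measure as $S_{K_t[n-1]} = S_K + (n-1)\,t\,S_{K[n-2],\phi_t} + O(t^2)$ in total variation and then invoke ``weak continuity of mixed area measures under uniform convergence of their arguments'' to get $S_{K[n-2],\phi_t} \to S_{K[n-2],I}$. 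Neither claim is justified. Mixed area measures depend on second derivatives of the support function, so $\|\phi_t - h_I\|_{L^\infty}\to 0$ gives no control on $S_{K[n-2],\phi_t}$. Concretely, for $t<0$ one has $\phi_t = h_{K/(-t)} - h_{K_t/(-t)}$, a difference of support functions of bodies that blow up as $t\to 0^-$, and $S_{K[n-2],\phi_t} = \bigl(S_{K[n-2],K_t} - S_{K[n-1]}\bigr)/t$; asserting that this converges weakly to $S_{K[n-2],I}$ is exactly the one-sided differentiability of $t\mapsto S_{K[n-2],K_t}$, which is what needs to be proved, not something that can be cited. The $O(t^2)$ total-variation bound for the terms $t^j S_{K[n-1-j],\phi_t[j]}$, $j\geq 2$, suffers from the same problem.

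The paper circumvents all of this by never putting $\phi_t$ in the measure slot. It telescopes the difference quotient at the mixed-volume level,
\[
\frac{V(K_t)-V(K)}{t} \ =\ \sum_{i=1}^{n} V\bigl(K_t[n-i],K[i-1],\phi_t\bigr)
\ =\ \sum_{i=1}^n \frac{1}{n}\int_{S^{n-1}}\phi_t\, dS_{K_t[n-i],K[i-1]},
\]
so that $\phi_t$ appears only once and only as the integrand, while the measure $S_{K_t[n-i],K[i-1]}$ depends only on convex bodies $K_t\to K$ in Hausdorff distance, for which weak convergence is standard. Lemma~\ref{Lem6} then applies directly, and the same telescoping device handles $V(K_t[n-1],f)$, $V(K_t[n-2],f,f)$, and the $V(K_t[n-1],f^2/h_K)$ piece of your product rule after writing $f$ and $f^2/h_K$ as differences of support functions (which is exactly where the $C^2$ hypothesis is used). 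You should replace the area-measure expansion by this telescoping identity; the rest of your proposal then goes through.
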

\begin{proof}
    The formal algebraic computation is the same as \eqref{Eqn2bis}. Hence it remains to justify the interchange of mixed volume with taking derivatives. 
    
    \textbf{Step 1}:
    In this step we show $\frac{d}{dt}\Bigr|_{t=0} V(K_t)=n V(K[n-1],I)$.
    We have, 
    \begin{align}\nonumber
        \frac{d}{dt}\Bigr|_{t=0} V(K_t) 
    =& \lim_{t\to 0}\frac{V(K_t)-V(K)}{t}\\
    \nonumber
    =& \lim_{t\to 0} \sum_{i=1}^{n}V(K_t[n-i],K[i-1],\tfrac{h_{K_t}-h_K}{t} ) \\
    =&  \sum_{i=1}^{n}  \lim_{t\to 0}\frac{1}{n}
    \int_{S^{n-1}}\frac{h_{K_t}-h_K}{t} dS_{K_t[n-i],K[i-1]}.
    \label{Eqn26}
    \end{align}
     By Fact \ref{Fact1}, $\frac{h_{K_{t}}-h_{K}}{t}$ 
    converges uniformly to $h_I$. It follows that  $K_t$ converges to $K$ in Hausdorff distance as $t$ converges to 0. Hence  $S_{K_t[n-i],K[i-1]}$ weakly converges to $S_{K}$ \cite[Page 281]{Schneider}, which is a finite measure. 
    By Lemma \ref{Lem6},
    $$
     \lim_{t\to 0}
     \frac{1}{n}
    \int_{S^{n-1}}\frac{h_{K_t}-h_K}{t} dS_{K_t[n-i],K[i-1]} = V(K[n-1],h_I).
    $$
    Plug this back into \eqref{Eqn26}, we obtain 
    $\frac{d}{dt}\Bigr|_{t=0} V(K_t)=n V(K[n-1],h_I)$ as desired.

    Since $f$ is $C^2$, we may write $f=h_L-h_M$ for some convex bodies $L,M$ \cite[Cor.2.2]{bochner}. It is then clear that the above proof works \textit{verbatim} for $\frac{d}{dt}\Bigr|_{t=0} V(K_t[n-1],f)=(n-1) V(K[n-2],I,f)$ and $\frac{d}{dt}\Bigr|_{t=0} V(K_t[n-2],f,f)=(n-2) V(K[n-3],I,f,f)$.
    
    \textbf{Step 2}:
    In this step we show 
    \begin{equation}
        \label{Eqn31}
    \frac{d}{dt}\Bigr|_{t=0} V(K_t[n-1],\tfrac{f^2}{h_{K_t}})=(n-1)V(K[n-2],I,\tfrac{f^2}{h_{K}})-V(K[n-1],\tfrac{f^2}{h_K^2}h_I).
    \end{equation}
    Indeed we have,
    \begin{align*} 
    &\frac{ V(K_t[n-1],\tfrac{f^2}{h_{K_t}})-  V(K[n-1],\tfrac{f^2}{h_{K}})}{t}\\
    &\quad =
    V(K_t[n-1],\tfrac{1}{t}(\tfrac{f^2}{h_{K_t}}-\tfrac{f^2}{h_{K}}))  +\tfrac{1}{t} 
    (V(K_t[n-1],\tfrac{f^2}{h_{K}})-V(K[n-1],\tfrac{f^2}{h_{K}})).
    \end{align*}
    On the one hand, we have $\tfrac{1}{t}(\tfrac{f^2}{h_{K_t}}-\tfrac{f^2}{h_{K}}) = -\frac{f^2}{h_Kh_{K_t}} \frac{h_{K_t}-h_K}{t}$. 
    Since $\frac{h_{K_t}-h_K}{t}$  converges uniformly to $h_I$ as $t$ goes to $0$ by   Fact \ref{Fact1},
     we know $\tfrac{1}{t}(\tfrac{f^2}{h_{K_t}}-\tfrac{f^2}{h_{K}})$ converges uniformly to $-\frac{f^2}{h_K^2}h_I$. Also $S_{K_t}$ converges weakly to $S_K$.
    By Lemma \ref{Lem6},
    \begin{align}\label{Eqn30}
    \lim_{t\to 0}
         V(&K_t[n-1],\tfrac{1}{t}(\tfrac{f^2}{h_{K_t}}-\tfrac{f^2}{h_{K}})) 
         =-V(K[n-1],\tfrac{f^2}{h_K^2}h_I),
    \end{align}
    On the other hand, since $f$ and $h_K$ are $C^2$ functions and $K$ contains the origin in its interior, $\frac{f^2}{h_K}$ also is a $C^2$ function. Therefore, it is possible to find two convex bodies $L,M$ such that $\frac{f^2}{h_K}=h_L-h_M$ \cite[Cor.2.2]{bochner}.
    We can again run the exact same argument as in Step 1 to obtain 
    \begin{align*}
        \lim_{t\to 0}\tfrac{1}{t} &
    (V(K_t[n-1],\tfrac{f^2}{h_{K}})-V(K[n-1],\tfrac{f^2}{h_{K}}))\\
    &=\lim_{t\to 0}\bigr[\tfrac{1}{t} 
    (V(K_t[n-1],L)-V(K[n-1],L))-\tfrac{1}{t} 
    (V(K_t[n-1],M)-V(K[n-1],M))\bigr]\\
    &=(n-1)V(K[n-2],I,L)-(n-1)V(K[n-2],I,M)\\
    &=(n-1)V(K[n-2],I,\tfrac{f^2}{h_{K}}).
    \end{align*}
    Combine this with \eqref{Eqn30}, we obtain \eqref{Eqn31}.
 
\end{proof}

\begin{proof}[Proof of Theorem \ref{Thm2}]
Let $ f=h_L-ch_K$ where 
    \begin{equation}
    \nonumber
         c =n \frac{V(K[n-1],L)}{V(K)} - (n-1)\frac{V(K[n-2],I,L)}{V(K[n-1],I)}.
    \end{equation}
    From the choice of $c$, it follows that 
    \begin{equation}\label{Eqn8}
         \frac{V(K[n-1], f)}{V(K)} - \frac{n-1}{n} \frac{V(K[n-2],I, f)}{V(K[n-1],I)}=0
    \end{equation}
    Fix any origin-symmetric segment $I$.
    Let 
    $$
    K_t:=\left\{
    \begin{array}{cc}
       K+tI,      & t\geq 0, \\
       K\div (-t)I , & t<0.
    \end{array}
    \right.
    $$
    Since $K$ is smooth, $I$ is adapted to $K$.
    Using Fact \ref{Fact1}, $\frac{d}{dt} \bigr|_{t=0}h_{K_t}  =  h_I$.
    By Proposition \ref{Prop2},
    \begin{align}\nonumber
        \frac{d}{dt}\Bigr|_{t=0}&\Delta(K_t, f)\\
        \nonumber
        =& \frac{(n-1)^2}{n^2} |I| \Delta(P_{I^\perp}K,f|_{I^\perp})
        +\frac{1}{n} V(K[n-1],\tfrac{ f^2}{h_{K}^2}h_I)\\
        \nonumber
        &- n V(K[n-1],I)\left( \frac{V(K[n-1], f)}{V(K)} - \frac{n-1}{n} \frac{V(K[n-2],I, f)}{V(K[n-1],I)}\right)^2 \\ 
        \label{Eqn11}
        \geq & \frac{1}{n} V(K[n-1],\tfrac{ f^2}{h_{K}^2}h_I)\geq 0,
    \end{align}
    where for the first inequality we used the assumption that the local log-Brunn-Minkowski inequality is true and \eqref{Eqn8}.
    
    By the assumption on LLBM we have $\Delta(K_t, f)\geq 0$. But  Lemma \ref{Lem1}  implies $\Delta(K, f)=0$. Hence $t=0$ minimizes $\Delta(K_t, f)$ and we have
    \begin{equation}\nonumber
        \frac{d}{dt} \Bigr|_{t=0}\Delta(K_t, f)=0.
    \end{equation}
    Combine this with \eqref{Eqn11}, we obtain $V(K[n-1],\tfrac{ f^2}{h_{K}^2}h_I)=0$, which is 
    $$
    \int_{S^{n-1}} \frac{ f^2}{h_{K}^2}h_I dS_K=0.
    $$
    Therefore, 
    $$
    \frac{ f^2}{h_{K}^2}h_I = 0 \text{ on } \opn{supp} S_{K}.
    $$
    By Fact \ref{Fact2}, $\opn{supp} S_K=S^{n-1}$. 
    Since $h_I(u)\neq 0$ holds for all $u\notin I^\perp$, we have $ f(u)=0$ for all $u\notin I^\perp\cap S^{n-1}$. Since $ f$ is continuous and $S^{n-1}\setminus I^\perp$ is dense in $S^{n-1}$, we conclude that $ f(u)=0$ for all $u\in S^{n-1}$. 
    Therefore, $h_L=ch_K+ f=ch_K$, which completes the proof.

\end{proof}

 \bibliographystyle{plain}
 \bibliography{myref.bib}

\end{document}